\def\term#1{{\em #1}}
  \def\R{\mathbb R}
\def\N{\mathbb N}
\def\om{\omega}
\def\supp{\operatorname{supp}}
\def\be{\beta}
\def\al{\alpha}
\def\ga{\gamma}
\def\la{\lambda}
\def\ph{\varphi}
\def\de{\delta}
\def\ka{\kappa}
\def\wt#1{\widetilde{#1}}
\def\cQ{\mathcal{Q}}
\def\cC{\mathcal{C}}
\def\cR{\mathcal{R}}
\def\cS{\mathcal{S}}
\def\cV{\mathcal V}
\def\cM{\mathcal M}
\def\cP{\mathcal P}
\def\cU{\mathcal U}
\def\cA{\mathcal A}
\def\cE{\mathcal{E}}
\def\cN{\mathcal{N}}
\def\cL{\mathcal{L}}
\def\gM{{\mathfrak{M}}}
\def\cl#1{\overline{#1}}
\def\clx#1#2{\overline{#2}^{\,#1}}
\def\id{\operatorname{id}}
\def\bt{{\beta}}
\def\bt{{\beta}\mskip 1}
\def\bt{{\beta}\mskip 0.4mu}
\def\oms{\om^*}
\def\es{\varnothing}
\def\sset#1{\{#1\}}
\def\set#1{\bbset#1\eeset}
\def\bbset#1:#2\eeset{\{#1\,:\,#2\}}
\def\bbsett#1:#2\eesett{\{#1\,:\,\text{#2}\}}
\def\ibbset#1:#2\ieeset{(#1)_{#2}}
\newcommand\restrB[2]{\ensuremath{\left.#1\right|_{#2}}}
\def\restr#1#2{\restrB{#1}{#2}}
\def\oo#1/{$O_{#1}$}
\def\ep{\varepsilon}
\def\gd/{$G_\delta$}
\def\rarr{\Rightarrow}
\def\larr{\Leftarrow}
\def\DD{\operatornamewithlimits{%
  \mathchoice{\vcenter{\hbox{\Large $\bigtriangleup$}}}
             {\mathrm{\bigtriangleup}}
             {\mathrm{\bigtriangleup}}
             {\mathrm{\bigtriangleup}}}}
\def\diag{\DD}
\def\si{\sigma}
\def\vh#1{\hat{#1}}
\def\vt#1{\tilde{#1}}
\def\cc#1{\cC_{#1}}
\def\ccc#1{$(\cc{#1})$}
\def\prt#1{{\mathfrak{P}}(#1)}
\def\cccc/{$ccc$}
\def\oms{\om^*}
\def\Si{\Sigma}
\def\nm#1{\mathsf{\mathbf{#1}}}
\def\nb{\nm{2}}
\def\ct{\nb^{<\om}}
\def\cs{\nb^{\om}}
\def\rst{\restriction}
\def\cms#1{$(\mathsf{MS}_{#1})$}
\def\:{\colon}
\def\see\cite#1{(see~\cite{#1})}
\def\seee\cite[#1]#2{(see~\cite[#1]{#2})}
\newtheorem{theorem}{Theorem}[section]
\newtheorem{lemma}[theorem]{Lemma}
\newtheorem{problem}[theorem]{Problem}
\newtheorem{proposition}[theorem]{Proposition}
\newtheorem{assertion}[theorem]{Proposition}
\newtheorem{cor}[theorem]{Corollary}
\theoremstyle{definition}
\newtheorem{proof}{Proof}
\newtheorem{example}[theorem]{Example}
\newtheorem{note}[theorem]{Note}
\begin{document}
%\def\refname{Литература}

%\def\path#1{ #1}

%\rusisspages{15--39}

%\frenchspacing

\title{Stone--\v Cech Compactifications of Spaces of Probability Measures}

\author{E.\,A.~Reznichenko}

\address{Lomonosov Moscow State University, Moscow}
\email{erezn@inbox.ru}

\date{December 31, 2024}\udk{515.122.4 515.122.252 515.122.536 519.2}
%  
%\datedor{26 ноября 2023 г.}
%\dateprin{26 ноября 2023 г.}

%\doi{10.4213/faa9999}

%\dedicatory{Анатолию Моисеевичу}

\markboth{Compactifications of spaces of probability measures}{E.\,A.~Reznichenko}

\maketitle

\begin{fulltext}

%\thanks{Работа выполнена при поддержке.}

%\subjclass{??????}
% 28A33 46E27 60B05 54C45 54C25

\begin{abstract}
It is proved that $P(\bt X)=\bt P(X)$ if and only if $P(X)$ is a pseudocompact space, 
where $P(X)$ is the space of Radon probability measures with the weak topology and $\bt X$ is the Stone--\v Cech 
compactification of $X$. A locally compact pseudocompact space $X$ is constructed such that $P(X)$ is not 
pseudocompact. Conditions are obtained under which $P(X)$ is pseudocompact. 
\end{abstract}

\begin{keywords}
space of Radon probability measures with weak topology, Stone--\v Cech compactification, pseudocompact spaces
\end{keywords}

\section{Introduction}\label{sec:intro}
%%% begin file('tex/sec/intro.tex') %%%
In many problems of measure theory and functional analysis, when considering measures on a topological
space $X$, it is very useful to use the Stone--\v Cech compactification $\beta X$ of this space.
Moreover, the space $P(X)$ of Radon probability measures on $X$ with the weak topology is naturally embedded
in the compact space $P(\beta X)$ of measures on the compactification, which also yields a continuous mapping 
from the Stone--\v Cech compactification $\beta P(X)$ of $P(X)$ to $P(\beta X)$. The question naturally arises of 
whether this mapping is a homeomorphism (in this case, $P(\beta X)$ and $\beta P(X)$ are considered to be the 
same). This question was first studied in the papers \cite{Bogachev2024-1ru} and \cite{Bogachev2024dan-ru}, 
where it was noted that $P(\beta X)$ and $\beta P(X)$ do not always coincide. Moreover, it they do, then 
the spaces $X$ and $P(X)$ are pseudocompact, so that in the class of metrizable spaces the coincidence occurs only for 
compact spaces. Furthermore, the cited papers contain examples of non-compact pseudocompact spaces $X$ for which 
the coincidence occurs and a number of open questions, including that of whether the equality 
$P(\beta X)=\beta P(X)$ is equivalent to the pseudocompactness of $P(X)$ or $X$. In this paper, we continue to 
study the question of when the Stone--\v Cech compactification of $P(X)$ coincides with the space $P(\beta X)$ 
and answer some of the questions posed in \cite{Bogachev2024-1ru} and \cite{Bogachev2024dan-ru}. In particular, 
we prove that if $P(X)$ is pseudocompact, then $P(\beta X)$ and $\beta P(X)$ coincide (Theorem~\ref{t:intro:3}) 
and construct a locally compact pseudocompact space $X$ for which $P(\beta X)\neq\beta P(X)$ 
(Example~\ref{e:intro:example:3}).

In \cite{BanakhChigogidzeFedorchuk2003}, it was proved that the space $P_\si(X)$ of $\si$-additive probability
measures on the Baire $\si$-algebra of $X$ is realcompact and the following question was posed: Is it 
true that the Hewitt extension $\nu P(X)$ of $P(X)$ coincides with $P_\si(X)$? If $X$ is pseudocompact, then 
$P_\si(X)=P(\bt X)$ and $\nu P(X)=P(\bt X)$ imply that so is of $P(X)$. Therefore, 
Example~\ref{e:intro:example:3} also give a negative answer to Problem~5.5 
in~\cite{BanakhChigogidzeFedorchuk2003}.

The problem of describing spaces $X$ for which $P(\beta X)=\beta P(X)$ is a special case of the following general 
problem.

\begin{problem}\label{pq:intro:intro:1}
Let $X$ be a space with some structure. When can this structure be extended to~$\be X$?
\end{problem}

Some of the known results in this direction are Gliksberg's theorem \see\cite{gli1959} that,  
for a space $X$, the equality $\bt (X^2)= (\bt X)^2$ holds if and only if $X^2$ is pseudocompact  
and the Comfort--Ross theorem \see\cite{ComfortRoss1966} that if $G$ is a topological group,
then the continuous group operations on $G$ can be extended to $\bt G$ if and only if $G$ is pseudocompact.
Moreover, if $\bt G$ is homeomorphic to some topological group, then $G$ is pseudocompact.

In \cite{ReznichenkoUspenskij1998}, the Comfort--Ross theorem was extended to Mal'tsev spaces.
A Mal'tsev space is a space $X$ with a continuous Mal'tsev operation, that is, a ternary operation
$m\: X^3\to X$ for which the identities $m(x,y,y)=m(y,y,x)=x$ hold.
For a Mal'tsev space $X$, the following conditions are equivalent:
(1)~$X$ is pseudocompact; (2)~the Mal'tsev operation on $X$ extends to a continuous operation on $\bt X$;
(3)~$\bt X$ is homeomorphic to some compact Mal'tsev space \see\cite{ReznichenkoUspenskij1998}.

There is a standard Mal'tsev operation on groups: $m(x,y,z)=xy^{-1}z$. Mal'tsev operation plays an important 
role in the study of universal algebras. Its fundamental importance in topological 
algebra is determined by the fact that quotient homomorphisms of topological universal algebras with a Mal'tsev 
operation are open \see\cite{Malcev1954ru}. This ensures that compact Mal'tsev spaces are Dugundji   
\see\cite{Uspenskii1989ru}. Dugundji compact spaces have the following important property: if $X$ is a Dugundji 
compact space and $Y\subset X$ is dense in $X$, then the following conditions are equivalent: (1)~$Y$ is 
pseudocompact; (2)~$\bt Y=X$; (3)~$\bt Y$ is a Dugundji compact space 
\see\cite{Uspenskii1989ru,ReznichenkoUspenskij1998}.

The problem of extending operations from $X$ to $\bt X$ in the general case of a topological universal algebra 
$X$ was considered in \cite{Reznichenko2024efualg}. A universal algebra is a set with a collection of $n$-ary 
operations. Examples of universal algebras include groups, semigroups, and sets with a Mal'tsev operation. 
Vector spaces are universal algebras as well; in addition to the additive group operations on a vector 
space, there are also continuum many unary operations $v\mapsto \la v$ for $\la\in\R$. A convex set $C$ can also 
be viewed as a universal algebra with continuum many binary operations $(x,y)\mapsto \la x +(1-\la)y$ for 
$\la\in [0,1]$. Using results of \cite{Reznichenko2024efualg}, the author proved in 
\cite{Reznichenko2024betacs-ru} that, given a pseudocompact convex space $C$, the continuous operations on 
$C$ extend to separately continuous operations on $\bt C$. In particular, $\bt C$ is a path-connected space.

Convex compact spaces and spaces of probability measures on compact spaces are not necessarily Dugundji; thus,
there is no complete analogy with the theory of topological groups. There exists a compact space $X$ 
such that $P(X)$ contains a dense pseudocompact convex subset $Y$ 
for which $\bt Y\neq P(X)$ (Example~\ref{e:intro:conv:1}).

However, there is an analogy with other related results. Namely, for a space $X$, the following conditions are 
equivalent: (1)~$P(X)$ is pseudocompact; (2)~$\bt P(X) = P(\bt X)$; (3)~$\bt P(X)$ is homeomorphic to a 
compact convex set (\cite{Bogachev2024dan-ru,Bogachev2024-1ru}, Theorem~\ref{t:intro:3}).

\begin{problem}\label{pq:intro:intro:2}
Determine for which (normal) functors $F$ from the category of Tychonoff spaces to the category of 
Tychonoff spaces the following conditions are equivalent: (1)~$F(X)$ is pseudocompact; (2)~$\be F(X)=F(\be X)$; 
(3)~$\be F(X)$ is homeomorphic to $F(Y)$ for some compact space~$Y$. 
\end{problem}

\begin{problem}\label{pq:intro:intro:3}
Let $X$ be a space such that $\bt(X^2)$ is homeomorphic to $Y^2$ for some space $Y$. Is $X^2$ pseudocompact?
\end{problem}

\subsection{Notation, Definitions, and Conventions}

%%% begin file('tex/sec/defs.tex') %%%

Let $\om=\sset 0 \cup \N$ and $\om_1$ be the first countable and the first uncounable ordinal, respectively. 
By space we always mean a Tychonoff space.

The space $P(X)$ is a subspace of the compact space $P(\bt X)$; hence the embedding $i$
of $P(X)$ into $P(\bt X)$ extends to a continuous mapping $\hat i\: \bt P(X)\to P(\bt X)$.
By the question of the equality $\bt P(X)=P(\bt X)$ we mean the question of whether $\hat i$ is a homeomorphism.

Let $Y\subset X$. A set $Y\subset X$ is $C$-embedded ($C^*$-embedded) in a space $X$ if
every (bounded) continuous function on $Y$ extends to a continuous function on $X$.
The Stone--\v Cech compactification $\bt X$ of a space $X$ is characterized by 
that $\bt X$ is a compact extension of $X$ and $X$ is $C^*$-embedded in $\bt X$.
The equality $\bt P(X)=P(\bt X)$ means that $P(X)$ is $C^*$-embedded in $P(\bt X)$.

We denote by $\diag$ the \term{diagonal product} of mappings.
Given a set $X$  and mappings $f\: X\to Y$ and $g\: X\to Z$,  $(f\diag g)(x)=(f(x),g(x))$.
Given  be a family of sets $Y_\al$, $\al\in A$ and a family of mappings $f_\al\: X\to Y_\al$, $\al\in A$, 
the mapping
\[
\diag_{\al\in A}f_\al\: X\to \prod_{\al\in A} Y_\al,\ x\mapsto (f_\al(x))_{\al\in A}
\]
is called the \term{diagonal mapping} of the family of mappings $f_\al$, $\al\in A$. If $A=\{1,2,\dots,n\}$, 
then we denote
\[
\diag_{n\in A}f_n=\diag_{i=1}^n f_n\: X\to \prod_{i=1}^n Y_n,\ x\mapsto (f_1(x),f_2(x),\dots,f_n(x)).
\]

\subsubsection{Definitions from measure theory}

Let $\mu$ be a probability measure on the Borel $\si$-algebra of a space $X$.
A measure $\mu$ is said to be \term{Radon} if for every Borel set $B\subset X$ and every $\ep>0$, 
there exists a compact set $K\subset B$ such that $\mu(B\setminus K)<\ep$.
The set of Radon probability measures on a space $X$ is denoted by $P(X)$.
The \term{support} $\supp\mu$ of a measure $\mu$ is the smallest closed set $F\subset X$ for which $\mu(F)=1$. 
A measure $\mu$ is said to be \term{discrete} if $\mu(C)=1$ for some
countable $C\subset X$. A measure $\mu$ is \term{continuous} if $\mu(\sset x)=0$ for every $x\in X$.

%Кроме радоновских мер рассматриваются следующие семейства мер.
We will consider families of probability measures on a space $X$ that are identified with certain families
of Radon probability measures on the Stone--\v Cech compactification $\bt X$.
Let $n\in\N$. For $k\in\sset{n,f,\om,\be,R,\tau,\si,a}$ and $\mu\in P(\be X)$, 
we say that $\mu\in P_k(X)$ if the following condition $(P_k)$ is satisfied:
\begin{itemize}
\item[$(P_n)$]
$|\supp \mu|\leq n$;
\item[$(P_f)$]
$|\supp \mu|<\om$;
\item[$(P_\om)$]
the measure $\mu$ is discrete;
\item[$(P_\be)$]
$\mu(K)=1$ for some compact set $K\subset X$;
\item[$(P_R)$]
$\mu(S)=1$ for some $\si$-compact set $S\subset X$;
\item[$(P_\tau)$]
$\mu(K)=0$ for all compact sets $K\subset \bt X\setminus X$;
\item[$(P_\si)$]
$\mu(K)=0$ for all compact sets $K\subset \bt X\setminus X$ of type $G_\de$ in $\bt X$;
\item[$(P_a)$]
$\mu$ is arbitrary.
\end{itemize}

Note that $P(X)=P_R(X)$, $P(\bt X)=P_a(X)$,
$P_f(X)=\bigcup_{n=1}^\infty P_n(X)$, $P_n(X)\subset P_f(X)
\subset P_\be(X)\subset P_\tau(X)\subset P_\si(X)\subset P_a(X)$, and $P_f(X)\subset P_\om(X)\subset P_R(X)$.
The space $P_1(X)$ consists of all Dirac measures on $X$ and is identified with $X$.
The space $P_\si(X)$ is identified with the space of probability measures on the Baire $\si$-algebra of $X$.
The space $P_\tau(X)$ is identified with the space of $\tau$-additive probability measures on $X$.
A discussion of some of these spaces can be found in~\cite{BanakhChigogidzeFedorchuk2003}.

For $k\in\sset{f,\om,\be,R,\tau,\si,a}$, $n\in\N$, and a space $X$, we denote by $P_k^n(X)$ 
the iterated spaces of measures defined by 
$P_k^1(X)=P_k(X)$ and $P_k^n(X)=P_k(P_k^{n-1}(X))$ for $n>1$.

Let $\ph\: X\to Y$ be a continuous mapping of spaces. The mapping $P(\ph)\: P(X)\to P(Y)$ is defined by 
$\mu(\ph^{-1}(B))=P(\ph)(\mu)(B)$ for $\mu\in P(X)$ and a Borel set $B\subset Y$.
The mapping $P(\ph)$ is a continuous extension of $\ph$ \seee\cite[2.6]{Bogachev2016book}.
For compact spaces $X$ and $Y$, the mapping $P(\ph)$ is defined by
\[
\int\limits_Y f(y)\,\mathrm{d} \nu = \int\limits_X f(\ph(x)) \,\mathrm{d} \mu,
\]
where $f$ is a continuous function on $Y$, $\mu\in P(X)$, and $\nu = P(\ph)(\mu)$. The mapping $\ph$ 
extends to a continuous mapping $\bt \ph\: \bt X\to \bt Y$, and 
$P(\bt \ph)$ maps $P(\bt X)$ to $P(\bt Y)$.
For $k\in\sset{f,\om,\be,R,\tau,\si,a}$, we have $P(\bt \ph)(P_k(X))\subset P_k(Y)$ 
\see\cite{Banakh1995ru,Fedorchuk1991}.

\subsubsection{Definitions from topology}

For a space $X$ and $M\subset X$, we denote by $\clx XM$ the closure of $M$ in $X$. 
If it is clear from the context which $X$ is meant, then we write $\cl M$ instead of~$\clx XM$.

A space $X$ is called a \term{$k$-space} if $F\subset X$ is closed if and only if $F\cap K$ is closed for every 
compact set $K\subset X$. All locally compact spaces are $k$-spaces.

A space $X$ is said to have \term{countable Suslin number} if every disjoint family of open sets in $X$ 
is at most countable.

A family $\cU$ of subsets of a space $X$ is said to be \term{locally finite} if, for any point $x\in X$, 
there exists a neighborhood $V$ of $x$ such that $|\set{U\in \cU: U\cap V\neq\es}|<\om$.

A sequence $(M_n)_n$ of subsets of $X$ \term{accumulates} to a point $x\in X$ if 
$|\{n\in\om: U_n\cap U\neq\es\}|=\om$ for every neighborhood $U$ of $x$.

A space $X$ is said to be \term{pseudocompact} if every continuous function on $X$ is bounded. Below we list 
useful pseudocompactness criteria (see \cite[1.1.4, 1.1.5, and 1.1.7]{AACCICTM2018}, \cite[3.10.22 and 
3.10.23]{EngelkingBookRu}).
%\begin{fact}\label{f:defs:pc}
For a space $X$, the following conditions are equivalent:
\begin{enumerate}
\item
$X$ is pseudocompact;
\item
If $(U_n)_n$ is a sequence of nonempty open sets $X$, then $(U_n)_n$ accumulates to some point $x\in X$;
\item
If $(U_n)_n$ is a decreasing sequence of nonempty open sets $X$, then $\bigcap_n \cl{U_n}\neq\es$;
\item
Every locally finite family of open sets is finite.
\end{enumerate}
%\end{fact}

A space $X$ is said to be \term{$\om$-bounded} ($\om$-bounded) if $\cl M$ is compact for every countable 
$M\subset X$. A space $X$ is said to be \term{totally countably compact} (totally countably compact) if for every 
infinite $L\subset X$ there exists a countable $M\subset L$ such that $\cl M$ is compact 
\see\cite{Vaughan1984handbook}.

A space $X$ is called \term{pseudo-$\om$-bounded} if for every
sequence $(U_n)_n$ of nonempty open sets, there exists a compact subset $K\subset X$ such that 
$U_n\cap K\neq\es$ for all $n$ 
\see\cite{AngoaOrtiz-CastilloTamariz-Mascarua2013,AngoaOrtiz-CastilloTamariz-Mascarua2014,Garcia-FerreiraOrtiz-Castillo2018}.

A space $X$ is called \term{almost pseudo-$\om$-bounded} if for every
sequence $(U_n)_n$ of open nonempty open sets, there exists a subsequence $(n_k)_k$ and a compact 
subset $K\subset X$ such that $U_{n_k}\cap K\neq\es$ for all $k$ 
\cite[Definition 3.8]{AngoaOrtiz-CastilloTamariz-Mascarua2014}, \cite[Definition 1.4.5]{AACCICTM2018}.

A space $X$ is called a \term{Frol\'\i k space} if $X\times Y$ is pseudocompact for every pseudocompact space $Y$.

%Every Frolik space is pseudocompact. The product of any family of Frolik spaces is a Frolik space \cite[Theorem 3.1]{Noble1969-2}.

A class of spaces $\cP$ is said to be \term{multiplicative} if $\prod\cM\in\cP$ for every family 
$\cM\subset \cP$.

Results of \cite{Vaughan1984handbook,AACCICTM2018,Garcia-FerreiraOrtiz-Castillo2018}
imply the following assertion.

\begin{proposition}
\label{p:intro:defs:1}
{
\long\def\tttT#1{{\parbox[c][2em]{16em}{{\begin{center}\small #1\end{center}}}}}
\long\def\tttt#1{{\parbox[c][2em]{12em}{{\begin{center}\small #1\end{center}}}}}
%\shorthandoff{"}
\[
\begin{tikzcd}
\tttt{countably compact spaces}
\ar[d,xshift=2ex,"{k\text{-spaces}}"]
\\
\tttT{totally countably compact spaces} \arrow[d] \arrow[u]
&
\tttt{$\om$-bounded spaces} \arrow[d] \arrow[l]
\\
\tttT{almost pseudo-$\om$-bounded spaces} \arrow[dr]
&
\tttt{pseudo-$\om$-bounded spaces} \arrow[l]
\\
\tttt{pseudocompact spaces} \ar[u,"{k\text{-spaces}}"]
&
\tttt{Frol\'\i k spaces} \arrow[l]
\end{tikzcd}
\]
%\shorthandon{"}
}
The classes of $\om$-bounded, pseudo-$\om$-bounded, almost pseudo-$\om$-bounded, and Frol\'\i k spaces 
are multiplicative.
\end{proposition}

A space $X$ is called a Mr\'owka space if (1)~the set of isolated points in $X$ is countable and dense in $X$;
(2)~the set of nonisolated points is discrete; (3)~$|\bt X \setminus X|=1$. Mr\'owka \see\cite{Mrowka1977} proved
that such spaces exist (see also \cite[Theorem 8.6.1]{Hernandez-HernandezHrusak2018}).
Mr\'owka spaces are pseudocompact and locally compact \see\cite{Hernandez-HernandezHrusak2018}.

A subset $M\subset X$ of $X$ is \term{relatively countably compact} in $X$ if every sequence $(x_n)_n\subset M$
has a limit point in~$X$.
A subset $G\subset X$ of $X$ is called a \term{$G_\de$-set} if it is the intersection of countably many open sets.

A point $x$ of $X$ is called a \term{$P$-point} if $x$ belongs to the interior of any $G_\de$-set containing~$x$.

A subset $S\subset X$ of $X$ is said to be \term{$G_\de$-dense} in $X$ if $S$ intersects any nonempty 
$G_\de$-set. A space $X$ is pseudocompact if and only if $X$ is $G_\de$-dense in the Stone--\v Cech 
compactification of~$\bt X$.

A space $X$ is said to be \term{realcompact} if $X$ can be embedded in a product of real lines $\R^A$ as a closed 
subspace. The Hewitt realcompactification $\nu X$ of a space $X$ is a realcompact extension of $X$ 
such that $X$ is dense and $C$-embedded in $\nu X$ \seee\cite[Section 3.11]{EngelkingBookRu}.
The Hewitt realcompactification $\nu X$ is naturally embedded in the Stone--\v Cech compactification $\bt X$: 
$p\in \nu X\subset \bt X$ 
if and only if every $G_\de$-set $G\subset \bt X$ containing $p$ intersects $X$.
A space $X$ is pseudocompact if and only if $\nu X=\bt X$. A space $X$ is realcompact if and only if 
$\nu X=X$ \seee\cite[Section 3.11]{EngelkingBookRu}.

Other names for realcompact spaces are $R$-complete spaces, $Q$-spaces,  real complete spaces, 
functionally complete spaces, $R$-compact spaces, Hewitt--Nachbin spaces, and Hewitt complete spaces; 
Hewitt extension is also called Hewitt completion.

A family of open subsets $\cN$ of a space $X$ is called a \term{$\pi$-base} if for every nonempty open set
$U\subset X$, there exists a nonempty $V\in\cN$ such that $V\subset U$.

A space $X$ is Lindel\"of if every open cover of $X$ has a countable subcover.

A space $X$ is locally (path) connected if the open (path) connected subsets of $X$ form a base for~$X$.

A family $\cM$ of subsets of a space $X$ is said to have the \term{finite intersection property} 
if the intersection $\bigcap\cL$ is nonempty for every nonempty finite subfamily $\cL\subset\cM$.
A space $X$ has \term{precaliber} $\om_1$ if for every uncountable family $\cU$ of open subsets of $X$, 
there exists an uncountable subfamily $\cV\subset\cU$ with the finite intersection property.

A space $X$ is said to be \term{scattered} if any non-empty subset $M\subset X$ contains a point $x\in M$ 
isolated in~$M$.

%%% end file('tex/sec/defs.tex') %%%

\subsection{Main Results}

A space $\bt X$ is locally connected if and only if $X$ is locally connected and pseudocompact 
\cite[Theorem 9.3]{Walker1974}. Based on this fact and on the fact that a convex subset
of a topological vector space (TVS) is locally connected (Proposition~\ref{f:defs:circled}), the following fact 
was essentially proved in \cite{Bogachev2024dan-ru} (see the proof of Theorem~2 in~\cite{Bogachev2024dan-ru}).

\begin{proposition}[{\cite{Bogachev2024dan-ru}}]\label{p:intro:1}
Let $X$ be a space for which $\bt X$ be homeomorphic to a convex subset of some TVS.
Then $X$ is pseudocompact.
\end{proposition}

This proposition also follows from the following fact: if $\bt X$ is path-connected, 
then $X$ is pseudocompact \cite{Reznichenko2024betacs-ru}.
The following proposition is a partial converse of this assertion.

\begin{proposition}[\cite{Reznichenko2024betacs-ru}]\label{p:intro:2}
If $X$ is a pseudocompact convex subset of a TVS, 
then $\bt X$ is path-connected.
\end{proposition}

The space $\bt X$ in Proposition~\ref{p:intro:2} has the topological-algebraic structure of a convex set 
\see\cite{Reznichenko2024betacs-ru}.

The following three theorems are the main results of this paper.\footnote{Theorems~\ref{t:intro:1}, 
\ref{t:intro:2}, and~\ref{t:intro:4} are proved in Section~\ref{sec:main}.}

\begin{theorem}\label{t:intro:1}
Let $X$ be a space such that $X^n$ is pseudocompact for $n\in\N$, and let $Y$ be a pseudocompact 
subspace of $P(\bt X)$ containing $P_f(X)$.
Then $Y$ is $C^*$-embedded in $P(\bt X)$ (i.e.,~$\bt Y=P(\be X)$).
\end{theorem}

\begin{theorem}\label{t:intro:2}
If $P(X)$ is pseudocompact, then $X^n$ is pseudocompact for all $n\in\N$.
\end{theorem}

These two theorems and Bogachev's theorem \cite[Theorem~2]{Bogachev2024-1ru} imply 
the following theorem, which answers Bogachev's question.

\begin{theorem}\label{t:intro:3}
The space $P(X)$ is $C^*$-embedded in $P(\bt X)$ (i.e.,~$\bt P(X)=P(\be X)$) if and only if $P(X)$ 
is pseudocompact. In this case, $X^n$ is pseudocompact for all $n\in\N$.
\end{theorem}

From Theorem \ref{t:intro:3} it follows that the problem of characterizing $X$ 
for which $\bt P(X)=P(\bt X)$, which was stated in \cite{Bogachev2024-1ru}, 
is equivalent to the following question.

\begin{problem}[\cite{Bogachev2024-1ru}]\label{pq:intro:main:1}
Characterize those spaces $X$ for which $P(X)$ is pseudocompact.
\end{problem}

Gliksberg \see\cite{gli1959} proved that for infinite spaces $X$ and $Y$
the equality $\bt X\times Y=\bt (X\times Y)$ holds if and only if $X\times Y$ is pseudocompact.
Gliksberg's theorem gives fundamental importance to the question of pseudocompactness of products 
in the study of Stone--\v Cech compactifications and extension operations, in particular, in the present paper 
(see Theorem~\ref {t:intro:1}). The Comfort--Ross theorem \cite{ComfortRoss1966} states that any product 
of pseudocompact topological groups is pseudocompact.
In \cite{ReznichenkoUspenskij1998}, this result was extended to Mal'tsev spaces.

In \cite{Reznichenko2022,Reznichenko2022-2} it was proved that if $X_1$, $X_2$, \dots, $X_n$ are pseudocompact 
spaces, then any continuous function on $\prod_{i=1}^n X_i$ can be extended to a separately continuous 
function on $\prod_{i=1}^n \bt X_i$
(in \cite{Reznichenko1994} this theorem was proved for $n=2$ and in \cite{ReznichenkoUspenskij1998}, for $n=3$).
In some situations, this theorem allows one to get rid of the requirement
that the finite powers of the space under consideration 
be pseudocompact \see\cite{Reznichenko1994,ReznichenkoUspenskij1998}.
In the present paper, we use this theorem to prove Proposition~\ref{p:intro:2} and the following theorem 
(cf.\ Theorem~\ref{t:intro:1}).

\begin{theorem}\label{t:intro:4}
Let $X$ be a pseudocompact space with precaliber $\om_1$ (e.g., such that $\bt X$ is separable),
and let $Y\subset P(\bt X)$ be a pseudocompact space containing $P_f(X)$. Then $Y$ is $C^*$-embedded in 
$P(\bt X)$ (i.e.,~$\bt Y=P(\be X)$).
\end{theorem}

\begin{problem}\label{pq:intro:th1and4}
Let $X$ be a pseudocompact space, and let $Y\subset P(\bt X)$ be a pseudocompact space containing $P_f(X)$. 
Is it true that $Y$ is $C^*$-embedded in~$P(\bt X)$?
\end{problem}

\subsection{Examples of Spaces $X$ for Which $P(X)$ is Pseudocompact and not Pseudocompact}

%В разделе \ref{sec:other} для $k\in\sset{f,\om,\be,\si}$ дается характеристика пространств $X$, для которых $P_k(X)$ псевдокомпактно.

\begin{example}\label{e:intro:example:1}
Let $X$ be an infinite space such that $X^n$ is countably compact for any $n\in\N$ and 
all compact subsets of $X$ are finite. For example, the space $X$ in \cite[Proposition 16]{rezn2020} has these 
properties. Proposition~\ref{p:intro:example:1} implies that $P(X)$ is not pseudocompact.
\end{example}

Example~\ref{e:intro:example:1}, together with Theorem~\ref{t:intro:3}, answers Questions~1 and~2 
of~\cite{Bogachev2024-1ru}.

\begin{example}\label{e:intro:example:2}
Let $X$ be any of the following spaces:
\begin{enumerate}
\item
the space $\om_1$ of all countable ordinals;
\item
the Tychonoff plane $\Pi_T=((\om_1+1)\times (\om+1))\setminus \sset{(\om_1,\om)}$;
\item
$X_{A,p}=[0,1]^A\setminus \sset p$, where $A$ is uncountable and $p\in [0,1]^A$.
\end{enumerate}
Then $X$ is a pseudo-$\om$-bounded space (Proposition~\ref{p:example:7}).
Therefore (Theorem~\ref{t:intro:other:3}), the space $X$, the space of measures $P(X)$, and 
the iterated space of measures $P^n(X)$
are pseudo-$\om$-bounded and hence pseudocompact to any power. Thus, by Theorem~\ref{t:intro:3},
$P(X)$ is $C^*$-embedded in $P(\bt X)$ (i.e.,~$\bt P(X)=P(\be X)$).
\end{example}

In \cite{Bogachev2024-1ru}, it was proved that $\bt P(X)=P(\bt X)$ if $X\in\sset{\om_1,\Pi_T}$.

\begin{example}\label{e:intro:example:3}
There exists a Mr\'owka space $X$ for which $P(X)$ is not pseudocompact (Theorem~\ref{t:Mrowka:1}).
The space $X$ is pseudocompact to any power and locally compact, and the Stone--\v Cech compactification 
of $\bt X$ is scattered.
\end{example}

This example answers a question of~\cite{Bogachev2024-1ru}.

\begin{problem}\label{pq:intro:example:1}
Does there exist a Mr\'owka space $X$ for which $P(X)$ is pseudocompact?
\end{problem}

In \cite[Example~2]{Bogachev2024-1ru}, it was proved that if $p\in\bt \om\setminus \om$ is a $P$-point
in the remainder $\bt \om\setminus \om$, then the space $P(\bt\om\setminus\sset p)$ is countably compact.
The existence of $P$-points in $\bt \om\setminus \om$ does not depend on the ZFC axioms: 
there are ZFC models in which they exist, and there are models
in which they do not exist \see\cite{vanMill1984handbook}.

\begin{example}\label{e:intro:example:4}
There exists an ultrafilter $p\in\bt \om\setminus \om$ such that the space $P(\bt\om\setminus\sset p)$ is totally 
countably compact (Propositions~\ref{p:example:2} and~\ref{p:example:5}).
\end{example}

\begin{problem}\label{pq:intro:example:2}
Does there exist an ultrafilter $p\in\bt \om\setminus \om$ for which the space $P(\bt\om\setminus\sset p)$ is 
not countably compact?
\end{problem}

At the second step of the proof of \cite[Example~2]{Bogachev2024-1ru}, the following statement was essentially 
proved (see also Proposition~\ref{p:example:3}): if $p\in\bt \om\setminus \om$, then $P_f(\om)$ is dense
and relatively countably compact in $P(\bt\om\setminus\sset p)$. Therefore, $P(\bt\om\setminus\sset p)$ 
is pseudocompact.

\begin{example}\label{e:intro:example:4}
Let $X=\bt\om\setminus\sset p$, where $p\in\bt \om\setminus \om$. Then the space $P(X)$ is almost 
pseudo-$\om$-bounded (Proposition~\ref{p:example:4}) and, therefore, pseudocompact. Moreover, 
$X$ is a locally compact pseudocompact space.
The space $X$ is not pseudo $\om$-bounded, since the closure of the set $\om$ of isolated points is not compact.
\end{example}

\begin{problem}\label{pq:intro:example:3}
Let $X=\bt\om\setminus\sset p$, where $p\in\bt \om\setminus \om$. Are the iterated spaces of measures $P^n(X)$ 
pseudocompact?
\end{problem}

\subsection{Extensions of Other Spaces of Probability Measures}

The space $P_\si(X)$ of Baire measures is real complete \see\cite{BanakhChigogidzeFedorchuk2003}.
The spaces $P_\tau(X)$ and $P(X)$ are dense in $P_\si(X)$. In \cite[Section 5]{BanakhChigogidzeFedorchuk2003}, 
the following problem was posed: find conditions under which $P_\si(X)$ is the Hewitt extension of $P_\tau(X)$ 
and $P(X)$. This problem is equivalent to that of finding conditions under which $P_\tau(X)$ and $P(X)$ 
are $C$-embedded in $P_\si(X)$. 
The following problem can be viewed as an extension of problems of Bogachev (Problem~\ref{pq:intro:main:1}) 
and of Banakh, Chigogidze, and Fedorchuk \cite[Problem~5.5]{BanakhChigogidzeFedorchuk2003}.

\begin{problem}\label{pq:intro:other:2}
Given a space $X$, $n\in\N$, and $k,l\in \sset{n,f,\om,\be,R,\tau,\si,a}$ such that $P_k(X)\subset P_l(X)$, 
find conditions on $X$ under which (1)~$P_k(X)$ is $C^*$-embedded in $P_l(X)$; 
(2)~$P_k(X)$ is $C$-embedded in $P_l(X)$; (3)~$P_k(X)= P_l(X)$.
\end{problem}

A space $X$ is said to be \term{universally measurable} if $P_\si(X)=P(X)$
\cite[Section~2]{BanakhChigogidzeFedorchuk2003}.
Item (3) of question \ref{pq:intro:other:2} was considered in \cite[Section~2]{BanakhChigogidzeFedorchuk2003}, where 
spaces $X$ for which $P_\si(X)=P_\tau(X)$ were studied.

\begin{theorem}[(Section \ref{sec:other})]\label{t:intro:other:1}
For a space $X$, the following conditions are equivalent:
\begin{enumerate}
\item
$X$ is pseudocompact;
\item
$P_\si(X)$ is pseudocompact;
\item
$P_\si(X)$ is $C^*$-embedded in $P(\bt X)$;
\item
$P_\si(X)$ is compact and coincides with $P(\bt X)$.
\end{enumerate}
\end{theorem}

In \cite[Problem~5.5]{BanakhChigogidzeFedorchuk2003}, the following question was asked: Is $P_\si(X)$ the Hewitt
extension of $P_\tau(X)$? This question can be reformulated as follows: Is $P_\tau(X)$ $C$-embedded in $P_\si(X)$?
The following example shows that the answer to this question is negative.

\begin{example}\label{e:intro:other:1}
Let $X$ be a Mr\'owka space for which $P(X)$ is not pseudocompact (see Example~\ref{e:intro:example:3}).
The space $X$ is pseudocompact and locally compact. Since $X$ is locally compact, it follows that 
$P(X)=P_\tau(X)$, and since $X$ is pseudocompact, it follows that $P_\si(X)=P(\bt X)$ 
(Theorem~\ref{t:intro:other:1}). But $P(X)$ is not $C$-embedded in the compact 
space $P(\bt X)$, because $P(X)$ is not pseudocompact.

Thus, $P(X)=P_\tau(X)$, $P_\si(X)=P(\bt X)$, and $P_\tau(X)$ is not $C$-embedded in $P_\si(X)$, i.e., 
$P_\si(X)$ is not the Hewitt extension of $P_\tau(X)$.
\end{example}

\begin{proposition}\label{p:other:intro:1}
If $P_\tau(X)$ is pseudocompact, then so is $X$.
\end{proposition}

\begin{proof}
%[Proof of Proposition \ref{p:other:intro:1}]
Since $P_\tau(X)$ is pseudocompact and dense in $P_\si(X)$, it follows that $P_\si(X)$ is pseudocompact. 
Theorem~\ref{t:intro:other:1} implies that $X$ is pseudocompact.
\end{proof}

\begin{problem}\label{pq:intro:other:3}
Let $X$ be a space for which $P_\tau(X)$ is pseudocompact.
\begin{enumerate}
\item
Is $P(X)$ pseudocompact?
\item
Is $X^n$ pseudocompact for all $n\in\N$?
\item
Is $P_\tau(X)$ \,$C^*$-embedded in $P(\bt X)$?
\end{enumerate}
\end{problem}

If the answer to Question~\ref{pq:intro:other:3}(1) is positive, then so is the answer to 
Question~\ref{pq:intro:other:3}(2)  (Theorem~\ref{t:intro:2}). If the answer to 
Question~\ref{pq:intro:other:3}(2) is positive, then so is the answer to Question~\ref{pq:intro:other:3}(3) 
(Theorem~\ref{t:intro:1}).

Theorem~\ref{t:intro:4} and Proposition~\ref{p:other:intro:1} imply the following assertion.

\begin{theorem}\label{t:other:intro:2}
If $X$ has precaliber $\om_1$ and $P_\tau(X)$ is pseudocompact, then $P_\tau(X)$ is $C^*$-embedded in~$P(\bt X)$.
\end{theorem}

\begin{theorem}[(Section \ref{sec:other})]\label{t:intro:other:3}
For a space $X$, the following conditions are equivalent:
\begin{enumerate}
\item
$X$ is pseudo-$\om$-bounded;
\item
$P_\be(X)$ is pseudocompact;
\item
$P_\be(X)$ is $C^*$-embedded in $P(\bt X)$;
\item
$P_\be(X)$ is pseudo-$\om$-bounded.
\end{enumerate}
\end{theorem}

\begin{problem}\label{pq:intro:other:4}
Let $X$ be a space for which $P(X)$ ($P_\tau(X)$) is pseudocompact.
\begin{enumerate}
\item
Is $X$ a Frol\'\i k space?
\item
Is $P(X)$ ($P_\tau(X)$) a Frol\'\i k space?
\end{enumerate}
\end{problem}

\subsection{Spaces of Signed Measures}

Given a compact space $K$, we denote by $M(K)$ the space of all Radon measures on $K$ with the weak topology.
Then $M(K)$ is a locally convex space and contains $P(K)$.
Let $X$ be a space.
For $k\in\sset{f,\om,\be,R,\tau,\si,a}$ we introduce the following notation:
\begin{align*}
M_k(X) &= \set{t\mu+ q\nu: \mu,\nu\in P_k(X),\ 0\leq t,q},
\\
U_k(X) &= \set{t\mu+ q\nu: \mu,\nu\in P_k(X),\ 0\leq t,q\leq 1}.
\end{align*}
The space $U_k(X)$ is the unit ball of $M_k(X)$.
The spaces $U_k(X)$ and $M_k(X)$ for $k\in\sset{\be,R,\tau}$ were studied in the papers
\cite{Koumoullis1981,Sadovnichii1999,SadovnichiiFedorchuk1999,Sadovnichii2000,Sadovnichii2007,SadovnichiiFedorchuk2008,Sadovnichii2020}.

The following question is a development of Question~\ref{pq:intro:other:2}.

\begin{problem}\label{pq:intro:am:1}
Given a space $X$,  $n\in\N$, and $k,l\in \sset{n,f,\om,\be,R,\tau,\si,a}$ such that $P_k(X)\subset P_l(X)$, 
find conditions on $X$ under which (1)~$M_k(X)$ is $C^*$-embedded in $M_l(X)$; (2)~$M_k(X)$ is  
$C$-embedded in $M_l(X)$;
(3)~$U_k(X)$ is $C^*$-embedded in $U_l(X)$; (4)~$U_k(X)$ is $C$-embedded in~$U_l(X)$.
\end{problem}

\subsection{Dense Pseudocompact Subsets of Convex Compact Sets}

For $i=1,2,3$, we denote by $\cc i$ the class of convex compact sets $K$ for which the following condition \ccc i 
holds:
\begin{enumerate}
\item[\ccc 1]
$K=P(X)$, where $X$ is a compact set;
\item[\ccc 2]
$K$ is a convex compact subset of some locally convex space (LCS);
\item[\ccc 3]
$K$ is a convex compact subset of some TVS.
\end{enumerate}

If $X$ is a pseudocompact convex subset of a convex compact set, then $\bt X$ is path-connected 
(Proposition~\ref{p:intro:2}).

\begin{problem}\label{pq:intro:conv:1}
Let $i=1,2,3$, and let $Y$ be a dense pseudocompact subset of a convex compact set $K\in\cc i$.
\begin{enumerate}
\item
Is $Y$ connected?
\item
Is $\bt Y$ path-connected?
\end{enumerate}
\end{problem}

Theorems \ref{t:intro:1} and \ref{t:intro:4} show that dense pseudocompact subsets of convex compact sets
are sometimes $C^*$-embedded in these convex compact sets.

\begin{example}\label{e:intro:conv:1}
Let $X=\sset{0,1}\times \om_1$, and let $\al X$ be the one-point Alexandroff compactification of $X$. We set 
$K=P(\al X)$. Since $\om_1$ is \cccc/-bounded (Proposition~\ref{p:example:om1}), it follows that so is $X$.
Consequently, the space of measures $P(X)$ is $\om$-bounded (Proposition~\ref{p:example:1}) and, therefore, 
countably compact. The space $P(X)$ is a dense pseudocompact convex subset of the convex compact set~$K$.

The set $X$ is not $C^*$-embedded in $\al X$, because the characteristic function $f$ of the set 
$\sset{0}\times \om_1$ is continuous on $X$ and does not extend to a continuous function on $\al X$.
Proposition~\ref{a:tms:0} implies that $P(X)$ is not $C^*$-embedded in $K=P(\al X)$.
\end{example}

\begin{problem}\label{pq:intro:conv:2}
Let $i=1,2,3$, and let $X$ be a dense (convex) pseudocompact subset of a convex compact set $K\in\cc i$.
\begin{enumerate}
\item
Is it true that $\bt X$ is homeomorphic to some space in $\cc i$?
\item
Is $X$ a Frol\'\i k space?
\item
Is it true that the finite powers of $X$ are pseudocompact?
\item
Let $Y$ be a dense (convex) pseudocompact subset of a convex compact set in $\cc i$.
Is it true that $X\times Y$ is pseudocompact?
\end{enumerate}
\end{problem}

%%% end file('tex/sec/intro.tex') %%%

\section{Topology of Spaces of Measures}\label{sec:tms}
%%% begin file('tex/sec/tms.tex') %%%
% tms
Let $X$ be a space. For $x\in X$, we denote by $\de_x$ the Dirac probability measure on $X$:
$\de_x(\sset x)=1$ and $\de_x(X\setminus \sset x)=0$. We set
\[
\de^X: X\to P(X),\ x \mapsto \de_x.
\]
The mapping $\de^X$ embeds the space $X$ in $P(X)$ as a closed subspace.

Let $C^*(X)$ denote the set of continuous bounded functions on $X$. For $f\in C^*(X)$, we set
\[
\hat f: P(X)\to \R,\ \mu \mapsto \int_X f(x) \,\mathrm{d} \mu.
\]
Functions of the form $\hat f$ are continuous affine functions on $P(X)$. 
The family  $\set{\hat f: f\in C^*(X)}$ of functions
generates the topology of $P(X)$ (i.e., the family $\{\hat f^{-1}(U): f\in C^*(X),\, U\subset \R$ is open$\}$ 
is a subbase of~$P(X)$).

Identifying $X$ with $\de^X(X)=P_1(X)$, we can say that $X$ is $C^*$-embedded in $P(X)$.

\begin{assertion}\label{a:tms:0}
If $Y\subset X$ and $P(Y)$ is $C^*$-embedded in $P(X)$, then $Y$ is $C^*$-embedded in $X$.
\end{assertion}

\begin{proof}
Let $f\in C^*(Y)$. Then $\hat f\in C^*(P(Y))$. Since $P(Y)$ is $C^*$-embedded in $P(X)$, it follows that 
the function $\vh f$ extends to a continuous
bounded function $g\: P(X)\to\R$. Therefore, the function $g\circ \de^X\in C^*(X)$ extends the function~$f$.
\end{proof}

A subset $G\subset X$ is said to be \term{functionally open} (\term{functionally closed}) if there exists a
continuous function $f\: X\to \R$ such that $G=f^{-1}(\R\setminus \sset 0)$ (respectively, $G=f^{-1}(\sset 0)$).

The sets of the form
\[
W(U,\ep)=\set{\mu\in P(X): \mu(U)>\ep },
\]
where $U$ is a functionally open subset of $X$ and $\ep>0$, 
form a subbase of the topology of the space of measures $P(X)$ \seee\cite[4.3]{Bogachev2016book}.

Note that the set $W(U,\ep)$ is open whenever $U$ is open and $\ep>0$. Indeed, let $\mu\in W(U,\ep)$.
Since $\mu$ is a Radon measure, we have $\mu(K)>\ep$ for some compact set $K\subset U$. Let $f$ be a
continuous function $f\: X\to[0,1]$ such that $f(X\setminus U)=\sset 0$ and $f(K)=\sset 1$.
Then $\mu\in W(U_0,\ep)\subset W(U,\ep)$ for $U_0=f^{-1}(\R\setminus \sset 0)$.

Let $U_1,U_2,\dots,U_n\subset X$, and let $\ep_1,\ep_2,\dots,\ep_n$ be positive numbers. We set 
\[
W( (U_i,\ep_i)_{i=1}^n ) = \bigcap_{i=1}^n W(U_i,\ep_i).
\]
The sets $W( (U_i,\ep_i)_{i=1}^n )$ for functionally open $U_i$ and $\ep_i>0$ form a base 
for the space $P(X)$. From this description of the topology of $P(X)$ it immediately follows that 
if $Y\subset X$, then the embedding of $P(Y)$ into $P(X)$ is topological \seee\cite[Theorem~2.4]{Banakh1995ru}.

For a family of subsets $\cM$ of $X$, let $\prt \cM$ denote a minimal partition $\cP$ of $X$ such that if 
$M\in\cM$, $P\in\cP$, and $M\cap P\neq\es$, then $P\subset M$. For $x\in X$, we set
\[
P_x(\cM) = \bigcap\set{M\in \cM: x\in M}\setminus \bigcup\set{M\in \cM: x\notin M}.
\]
Then $\prt \cM=\set{P_x(\cM):x\in X}$.

If $\cM$ is a finite family of Borel sets, then so is~$\prt \cM$.

\begin{assertion}\label{a:tms:1}
Let $U_1,U_2,\dots, U_n$ be open subsets of $X$, and let $\ep_1$, $\ep_2$, \dots, $\ep_n$ be positive numbers. 
If $\mu\in W( (U_i,\ep_i)_{i=1}^n )$,  $\cP$ is a finite partition of a space $X$ consisting of Borel sets
and refining the family $\prt{\set{U_i:i=1,\dots,n}}$, $x_P\in P$ for $P\in\cP$, and
\[
\nu = \sum_{P\in\cP} \mu(P) \de_{x_P},
\]
then $\nu\in W( (U_i,\ep_i)_{i=1}^n )$.
\end{assertion}

\begin{proof}
Since $\mu(P)=\nu(P)$ for $P\in\cP$,  it follows that $\mu(U_i)=\nu(U_i)$ for $i=1,2,\dots,n$.
Therefore, $\nu\in W( (U_i,\ep_i)_{i=1}^n )$.
\end{proof}

Note that Proposition~\ref{a:tms:1} implies that $P_f(X)$ is dense in~$P(X)$.

\begin{assertion}\label{a:tms:2}
Let $\mu=\sum_{i=1}^n \la_i\de_{x_i}\in P_f(X)$, where $x_i\in X$, $\la_i>0$ for $i=1,2,\dots,n$, $x_i\neq x_j$
for distinct $i,j\in\sset{1,2,\dots,n}$, and $\sum_{i=1}^n \la_i=1$.
Then the family of all sets of the form $W( (U_i,\la_i-\ep)_{i=1}^n )$, where $(U_i)_i$ is a disjoint family 
of nonempty functionally open neighborhoods of $x_i$ for $i=1,2,\dots,n$ and 
$0<\ep<\min(\la_1,\la_2,\dots,\la_n)$, forms a base at the point $\mu$ of the space $P(X)$.
\end{assertion}

\begin{proof}
Let $W$ be an open nonempty set in $P(X)$, and let $\mu\in W$. There exists an $m\in\N$, 
nonempty functionally open subsets $V_1$, $V_2$, \dots, $V_m$  of $X$,  and numbers 
$\ga_1,\ga_2, \dots, \ga_m>0$ such that
$\mu\in W( (V_j,\ga_j)_{i=1}^m )\subset W$. Let $(U_i)_{i=1}^n$ be a disjoint
system of functionally open neighborhoods of the points $(x_i)_{i=1}^n$ such that $U_i\subset V_j$ whenever  
$x_i\in V_j$.
Let $M_j=\set{i\in \sset{1,2,\dots,n}: x_i\in V_j}$. Then $\sum_{i\in M_j}\la_i>\ga_j$. We set 
\[
\ep = \frac 1{2n} \min \left\{\vphantom{\sum}\right. \sum_{i\in M_j}\la_i-\ga_j: j=1,2,\dots,m\left.\vphantom{\sum}\right\}.
\]
Then
\[
\mu\in W( (U_i,\la_i-\ep)_{i=1}^n ) \subset W( (V_j,\ga_j)_{i=1}^m )\subset W.
\]
\end{proof}

Proposition \ref{a:tms:2}, together with the density of $P_f(X)$ in $P(X)$, implies the following statement.

\begin{assertion}\label{a:tms:3}
The family of sets of the form $W( (U_i,\ga_i)_{i=1}^n )$, where $n\in\N$, $(U_i)_{i=1}^n$ is a disjoint family
of nonempty functionally open sets, and $\ga_i>0$ for $i=1,2,\dots ,n$, forms a $\pi$-base of the space~$P(X)$.
\end{assertion}

%%% end file('tex/sec/tms.tex') %%%

\section{Extensions of Other Spaces of Probability Measures}\label{sec:other}
%%% begin file('tex/sec/other.tex') %%%

\begin{proof}[of Theorem~\ref{t:intro:other:1}]
The implications $(4)\rarr(3)$ and $(4)\rarr(2)$ are obvious.

$(1)\rarr(4)$.
Condition (1) implies the equality $\nu X=\bt X$.
Since $P_\si(X)=P_\si(\nu X)$ \cite[Theorem~1.1]{BanakhChigogidzeFedorchuk2003}, it follows that 
$P_\si(X)=P(\bt X)$.

$(4)\rarr(1)$.
Since the closure of $X$ in $P_\si(X)$ is the Hewitt extension of $X$ 
\cite[Theorem~1.2]{BanakhChigogidzeFedorchuk2003},
we have $\nu X=\bt X$, i.e., $X$ is pseudocompact.

$(2)\rarr(4)$. Since $P_\si(X)$ is real complete \cite[Theorem~1.1]{BanakhChigogidzeFedorchuk2003},
it follows that $P_\si(X)$ is compact and coincides with~$P(\bt X)$.

The implication $(3)\rarr(2)$ follows from Proposition~\ref{p:intro:1}. 
\end{proof}

\begin{proposition}\label{p:other:1}
Let $X$ be a space, and let $k\in \sset{f,\om,\be,R,\tau,\si}$. If $P_k(X)$ is pseudocompact, 
then so is $X$.
\end{proposition}

\begin{proof}
The space $P_k(X)$ is dense in $P_\si(X)$. Therefore, $P_\si(X)$ is pseudocompact. Theorem~\ref{t:intro:other:1} 
implies that $X$ is pseudocompact.
\end{proof}

\begin{proof}[of Theorem~\ref{t:intro:other:3}]
The implication $(4)\rarr(2)$ follows from Proposition~\ref{p:intro:defs:1}.
%Implication $(3)\rarr(1)$ follows from Proposition \ref{p:other:1}.
The implication $(3)\rarr(2)$ follows from Proposition~\ref{p:intro:1}.

$(2)\rarr(1)$. Let $(U_i)_i$ be an infinite sequence of nonempty open sets, and let 
$V_n\subset \cl{V_n}\subset U_n$ be a nonempty open set for each $n\in\N$. We set
$W_n=W((V_i,\tfrac 1{2^i})_{i=1}^n)$ for $n\in \N$. Since the space $P_\be(X)$ is pseudocompact,
it follows that the sequence $(W_n)_n$ accumulates to a measure $\mu\in P_\be(X)$, and $K=\supp \mu$ is compact.

We will show that $K\cap \cl{V_n}\neq\es$ for all $n\in\N$. Assume that, on the contrary, $K\cap \cl{V_n}=\es$.
Let $S=W(X\setminus \cl{V_n},1-\tfrac 1{2^n})$.
Then $S\cap W_m\neq \es$ for some $m>n$. Let $\nu\in S\cap W_m$.
Since $\nu\in S$, we have $\nu(V_n)<\tfrac 1{2^n}$, and since $\nu\in W_m$, we have $\nu\in W(V_n,\tfrac 1{2^n})$ 
and, consequently, $\nu(V_n) > \tfrac 1{2^n}$. We have obtained a contradiction.

We have proved that $K\cap \cl{V_n}\neq\es$ for all $n\in\N$. Therefore, $K\cap U_n\neq\es$ for all $n\in\N$.

$(1)\rarr(4)$. Let $(W_n)_n$ be an infinite sequence of open nonempty subsets of $P(X)$. 
According to Proposition~\ref{a:tms:3}, for every $n\in \N$,  there exists a $k_n\in \N$,
a disjoint family of nonempty functionally open sets $(U_{n,i})_{i=1}^{k_n}$, and numbers 
$\ga_{n_i}>0$ for $i=1,2,\dots,k_n$ such that
\[
W((U_{n,i},\ga_{n,i})_{i=1}^{k_n})\subset W_n.
\]
Since $X$ is pseudo-$\om$-bounded, there exists a compact set $K\subset X$ such that $K\cap U_{n,i}\neq\es$
for any $n\in \N$ and $i\in \sset{1,2,\dots,k_n}$. Therefore, $P(K)$ is compact and $P(K)\cap W_n\neq\es$ 
for all $n\in\N$.

$(2)\rarr(3)$. It follows from $(2)\rarr(1)$ that $X$ is pseudo-$\om$-bounded. 
By virtue of Proposition~\ref{p:intro:defs:1},  $X^n$ is pseudocompact for all $n\in\N$. 
Theorem \ref{t:intro:1} implies that $P_\be(X)$ is $C^*$-embedded in $P(\bt X)$.
\end{proof}

\begin{proposition}\label{p:other:2}
Let $X$ be a space. If $P_\om(X)$ is pseudocompact, then $\bt X$ is a scattered compact space and 
$X$ is a scattered pseudocompact space.
\end{proposition}

\begin{proof}
Suppose that $\bt X$ is not scattered. Then $\bt X$ maps continuously onto the interval $[0,1]$ 
\cite[3.8]{Arhangelskii11989}.
Let $f\: \bt X\to [0,1]$ be a continuous surjective mapping.
Since $f$ is surjective, it follows that
\[
P(f)(P_\om(X))\subset P(f)(P_\om(\bt X))=P_\om([0,1])\neq P([0,1]).
\]
Given that $P_\om(X)$ is dense in $P_\om(\bt X)$ and $P(f)$ is continuous, we obtain
\[
P([0,1])=\cl{P_\om([0,1])}\subset \cl{P(f)(P_\om(X))} \subset P([0,1]).
\]
Therefore, $P(f)(P_\om(X))$
is dense in $P([0,1])$ and does not coincide with $P([0,1])$. This is a contradiction, 
since a dense pseudocompact subset $P(f)(P_\om(X))$ must coincide with the metrizable compact set $P([0,1])$.
Consequently, $\bt X$ and $X$ are scattered. Proposition~\ref{p:other:1} implies that $X$ is pseudocompact.
\end{proof}

\begin{proposition}
\label{f:defs:scattered}
Let $X$ be a scattered space. Then the set $X'$ of isolated points of $X$ is dense in $X$. If, in addition, $X$ 
is pseudocompact, then any infinite sequence $(x_n)_n \subset X'$ has an infinite subsequence 
$(x_{n_k})_k$ converging to some point $x\in X\setminus X'$.
\end{proposition}

\begin{proof}
Let $U\subset X$ be a nonempty open set. Since $X$ is scattered, there exists an isolated point $x\in U$. 
We have $\{x\}=U\cap V$ for some open $V\subset X$. Therefore, the point $x$ is isolated in $X$, that is, $x\in 
U\cap X'\neq\es$.

Suppose that $X$ is pseudocompact and $(x_n)_n \subset X'$ is an infinite sequence. Passing to a subsequence, 
we can assume that $x_n\neq x_m$ for distinct $n,m\in \om$.

Since $\{x\}$ is open for $x\in X'$ and $X$ is pseudocompact, it follows that any infinite set $Q\subset X'$ 
accumulates to some point in $X\setminus X'$.

Let $S$ be the set of limit points of the sequence $(x_n)_n$, and let $y\in S$ be an isolated point in $S$. 
Choose a  neighborhood $U$ of $y$ so that $\cl U\cap S=\{y\}$. Let $(x_{n_k})_k$ be an infinite subsequence 
such that $U\cap \{x_n:n\in\om\}=\{x_{n_k}:k\in\om\}$.

We claim that $(x_{n_k})_k$ converges to $y$. Assume the contrary. Let $y_k=x_{n_k}$ for $k\in\om$.
Then the set $Q=\{y_k:k\in\om\}\setminus V$ is infinite for some neighborhood $V$ of $y$. 
Let $z$ be a limit point of $Q$. Then $z\neq y$ and $z\in \cl U\cap S$ in contradiction with $\cl U\cap S=\{y\}$. 
\end{proof}

\begin{proposition}\label{p:intro:example:1}
Let $X$ be an infinite space in which all compact sets are finite. Then $P(X)$ is not pseudocompact.
\end{proposition}

\begin{proof}
Suppose that, on the contrary, $P(X)$ is pseudocompact.
Since all compact sets in $X$ are finite, we have $P_\om(X)=P(X)$. 
By Proposition \ref{p:other:2}, Proposition~\ref{f:defs:scattered},  
$\bt X$ is a scattered compact set and $X$ is a scattered pseudocompact space, 
because $P_\om(X)$ is pseudocompact. 
 Any sequence of isolated points in an infinite pseudocompact scattered space 
has an infinite convergent subsequence by Proposition~\ref{f:defs:scattered}. 
Therefore, $X$ contains an infinite compact set. This is a contradiction.
\end{proof}

%%% end file('tex/sec/other.tex') %%%

\section{Examples}\label{sec:example}
%%% begin file('tex/sec/example.tex') %%%

Let $\oms=\bt\om\setminus \om$ denote the remainder of $\om$ in the Stone--\v Cech compactification $\bt\om$.
The Stone--\v Cech compactification $\bt\om$ is identified in the standard way with the set of 
ultrafilters on $\om$. The set $\om$ is identified with the set of principal ultrafilters on $\om$, 
and the remainder $\oms$ is identified with the set of nonprincipal ultrafilters on~$\om$.

A nonempty family $p$ of subsets of $\om$ is called a \term{filter} if $p$ has the finite intersection property 
and $A\in p$ and $A\subset B\subset\om$ imply $B\in p$. A filter $p$ is said to be \term{free} 
if the intersection $\bigcap p$ of its elements is empty. A filter $p$ is an \term{ultrafilter} if and only if 
the following condition holds: if $A,B\subset\om$ and $A\cup B=\om$, then $A\in p$ or $B\in p$.
An ultrafilter $p$ is said to be \term{principal} if $p=\{A\subset\om: n\in A\}$ for some $n\in\om$; 
it is identified with $n$.

The base of the topology on $\bt\om$ is formed by sets of the form
\[
U(S)=\set{p\in \bt \om: S\in p},
\]
where $S\subset\om$. A local base of the topology at a point $p\in \bt \om$ is formed by sets of the form 
$U(S)$, where $S\in p$.

We say that a space $X$ is \term{\cccc/-bounded} if $\cl M$ is compact for every $M\subset X$ with 
countable Suslin number.

\begin{proposition}\label{p:example:1}
If $X$ is \cccc/-bounded, then $P(X)$ is $\om$-bounded and $P(X)=P_\be(X)$.
\end{proposition}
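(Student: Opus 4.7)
План доказательства. Воспользуюсь ключевым наблюдением: носитель любой радоновской вероятностной меры $\mu\in P(X)$ является \cccc/-подпространством $X$. В самом деле, если $\sset{W_\al:\al\in A}$ --- дизъюнктное семейство непустых относительно открытых подмножеств $\supp\mu$ вида $W_\al=V_\al\cap \supp\mu$ с открытыми $V_\al\subset X$, то каждое $V_\al$ содержит точку носителя и потому $\mu(V_\al)>0$; поскольку $\mu(X\setminus \supp\mu)=0$, имеем $\mu(W_\al)=\mu(V_\al)>0$. Попарно дизъюнктные борелевские множества $W_\al$ положительной меры не могут составлять несчётное семейство (стандартный аргумент: множества $\sset{\al:\mu(W_\al)>1/n}$ конечны), откуда $A$ не более чем счётно.

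Из этого наблюдения сразу получу равенство $P(X)=P_\be(X)$: для $\mu\in P(X)$ носитель $K=\supp\mu$ замкнут и \cccc/, поэтому по условию \cccc/-ограниченности $X$ множество $\cl K=K$ компактно, а так как $\mu(K)=1$, имеем $\mu\in P_\be(X)$.

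Чтобы доказать $\om$-ограниченность $P(X)$, возьму произвольное счётное $\sset{\mu_n:\nom}\subset P(X)$ и положу $K_n=\supp\mu_n$; по предыдущему шагу каждое $K_n$ компактно и \cccc/. Основной технический шаг --- показать, что объединение $L=\bigcup_\nom K_n$ также \cccc/ в $X$. Идея: если $\sset{V_\al:\al\in A}$ --- семейство открытых в $X$ множеств, для которого сужения $V_\al\cap L$ дизъюнктны и непусты, то $V_\al\cap V_\ga\cap K_n\subset V_\al\cap V_\ga\cap L=\es$ для различных $\al,\ga\in A$ и всех $n$; следовательно, при фиксированном $n$ непустые следы $V_\al\cap K_n$ образуют дизъюнктное семейство в \cccc/-пространстве $K_n$ и потому счётны, откуда $A$ счётно как счётное объединение счётных множеств.

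Тогда $\cl L$ компактно по условию, все $\mu_n$ лежат в $P(\cl L)$, а поскольку $P(\cl L)$ компактно и замкнуто в $P(X)$, замыкание $\sset{\mu_n:\nom}$ в $P(X)$ является замкнутым подмножеством компакта $P(\cl L)$ и, следовательно, компактно. Главное препятствие в этой схеме --- именно аккуратное установление \cccc/-свойства объединения $\bigcup_\nom K_n$, где нужно одновременно задействовать \cccc/ каждого $K_n$ и то, что дизъюнктность следов открытых множеств на $L$ автоматически передается на следы на каждом $K_n$.
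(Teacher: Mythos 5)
Ваше доказательство верно и по существу совпадает с доказательством из статьи: носитель каждой меры из $P(X)$ имеет счётное число Суслина, поэтому по \cccc/-ограниченности компактен (отсюда $P(X)=P_\be(X)$), а для счётного набора мер объединение носителей снова со счётным числом Суслина, его замыкание $K$ компактно и весь набор лежит в компакте $P(K)$. Вы лишь аккуратно прописываете шаги, которые в статье считаются очевидными (положительность меры открытых множеств, пересекающих носитель, и сохранение счётности числа Суслина при счётном объединении).
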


\begin{proof}
If $\mu\in P(X)$, then $\supp \mu$ has countable Suslin number.
Since $X$ is \cccc/-bounded, it follows that $\supp \mu$ is compact.
Therefore, $P(X)=P_\be(X)$. Let $(\mu_n)_n\subset P(X)$. Then $M=\bigcup_{n=1}^\infty \supp\mu_n$ 
has countable Suslin number. The closure $K=\cl M$ is compact, because $X$ is \cccc/-bounded. 
Hence $P(K)$ is compact and contains~$(\mu_n)_n$.
\end{proof}

\begin{proposition}\label{p:example:om1}
The space $\om_1$ of all countable ordinals is \cccc/-bounded, $P(\om_1)$ is $\om$-bounded, and 
$P(\om_1)=P_\be(\om_1)$.
\end{proposition}

\begin{proof}
In view of Proposition~\ref{p:example:1}, it suffices to show that $\om_1$ is \cccc/-bounded.
Let $Y\subset\om_1$ have countable Suslin number.

We show that $Y$ is separable. By Proposition~\ref{f:defs:scattered} the set of isolated points 
in $Y$ is dense in $Y$, because $Y$ is scattered. In spaces with countable Suslin number, the set of isolated 
points is at most countable.

Since $Y$ is separable, we have $Y\subset \la$ for some $\la<\om_1$. Therefore, $\cl Y$ is compact.
\end{proof}

\begin{proposition}[\cite{Kunen1980,vanMill1982Sixteen}]\label{p:example:2}
There exists a $p\in\oms$ such that $\oms\setminus\sset p$ is \cccc/-bounded.
\end{proposition}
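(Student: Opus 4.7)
Мой план --- построить $p\in\oms$ трансфинитной рекурсией длины $2^{\mathfrak{c}}$, следуя идее Кунена о построении специальных точек в $\oms$. Вначале я бы переформулировал требуемое свойство: \cccc/-ограниченность $\oms\setminus\sset p$ равносильна условию $p\notin\cl M$ для каждого \cccc/-подмножества $M\subset\oms\setminus\sset p$. Так как замыкание \cccc/-множества также \cccc/, достаточно обеспечить следующее условие на $p$: для каждого замкнутого \cccc/-подмножества $F\subset\oms$ либо $p\notin F$, либо $p$ изолировано в $F$ (то есть существует открыто-замкнутое $U\subset\oms$ с $U\cap F=\sset p$).

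Ключевое предварительное наблюдение: всякое замкнутое \cccc/-подмножество $F\subset\oms$ нигде не плотно. В самом деле, любое непустое открыто-замкнутое подмножество $\oms$ имеет вид $U(A)$ для бесконечного $A\subset\om$ и гомеоморфно $\oms$, а само $\oms$ не \cccc/: на $\om$ существует почти дизъюнктное семейство мощности $\mathfrak{c}$, порождающее попарно дизъюнктное семейство открыто-замкнутых множеств в $\oms$ мощности $\mathfrak{c}$. Отсюда следует, что для любого бесконечного $A\subset\om$ найдётся бесконечное $B\subset A$ с $U(B)\cap F=\es$.

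Далее я бы перечислил все замкнутые \cccc/-подмножества $\oms$ как $(F_\alpha)_{\alpha<2^{\mathfrak{c}}}$ (их не более $2^{\mathfrak{c}}$, так как $|\oms|=2^{\mathfrak{c}}$) и построил по рекурсии возрастающую цепочку центрированных систем $\mathcal{B}_\alpha\subset P(\om)$ мощности $|\mathcal{B}_\alpha|<2^{\mathfrak{c}}$. На шаге $\alpha$ имеется альтернатива: либо к $\mathcal{B}_\alpha$ удаётся добавить $A\in P(\om)$ с $U(A)\cap F_\alpha=\es$, что отделяет итоговую точку $p$ от $F_\alpha$, либо текущая система $\mathcal{B}_\alpha$ уже ``направляет'' $p$ в $F_\alpha$, и в этом случае нужно обеспечить изолированность $p$ в $F_\alpha$. В конце $p$ берётся как ультрафильтр, продолжающий $\bigcup_\alpha\mathcal{B}_\alpha$ согласованно с наложенными условиями.

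Основная трудность --- комбинаторная лемма, гарантирующая реализуемость указанной альтернативы на каждом шаге, несмотря на $2^{\mathfrak{c}}$ требований. Она доказывается с помощью техники \emph{независимых семейств} на $\om$ в духе конструкции слабых $P$-точек Кунена; подробности имеются в цитированных работах~\cite{Kunen1980,vanMill1982Sixteen}.
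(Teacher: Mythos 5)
Ваша переформулировка верна: ccc-ограниченность $\oms\setminus\sset p$ равносильна тому, что для каждого замкнутого ccc-подмножества $F\subset\oms$ либо $p\notin F$, либо $p$ изолировано в $F$; верно и наблюдение, что всякое замкнутое ccc-подмножество $\oms$ нигде не плотно (попутно: оценка числа замкнутых ccc-множеств величиной $2^{\mathfrak c}$ следует не из $|\oms|=2^{\mathfrak c}$, а из того, что замкнутое множество определяется набором базисных открыто-замкнутых множеств $U(A)$, которые оно пересекает, а их всего $\mathfrak c$). Однако сама схема рекурсии длины $2^{\mathfrak c}$ не проходит, и отложенная вами на конец комбинаторная лемма в цитированных работах не содержится. Препятствие кардинальное: центрированная система $\cB_\alpha\subset P(\om)$ имеет мощность не более $\mathfrak c$, а требований $2^{\mathfrak c}$, поэтому добавлять новые множества можно лишь на $\mathfrak c$ шагах, и нужно объяснить, почему все остальные требования выполняются автоматически --- это и есть всё содержание доказательства, и пошаговый аргумент его не даёт. Кроме того, альтернатива на шаге $\alpha$ не обоснована: если к этому моменту множество $K_\alpha=\bigcap_{B\in\cB_\alpha}U(B)$ уже попало внутрь замкнутого ccc-множества $F_\alpha$ без изолированных точек (скажем, замыкания счётного плотного в себе множества), то ни отделить будущую точку $p$ от $F_\alpha$, ни сделать её изолированной в $F_\alpha$ уже нельзя, и рекурсия застревает; нужная для предотвращения этого дальновидность обеспечивается техникой независимых семейств лишь при $\mathfrak c$ требованиях, а не при $2^{\mathfrak c}$.

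Канонический путь, на который и ссылается статья, устроен иначе: у Кунена и ван Милла строится точка с локальным структурным свойством ($\mathfrak c$-OK точка; у ван Милла это точка $x_\es$ из раздела 3.1), для чего достаточно рекурсии длины $\mathfrak c$ по $\mathfrak c$ требованиям, параметризованным последовательностями подмножеств $\om$, с использованием независимого семейства; затем отдельной леммой (у Кунена --- утверждения 1.4 и 2.6) показывается, что такая точка не лежит в замыкании никакого ccc-множества, её не содержащего, что закрывает сразу все $2^{\mathfrak c}$ замкнутых ccc-множеств. Чтобы ваш текст стал доказательством, нужно либо провести конструкцию $\mathfrak c$-OK точки и вывести из этого свойства требуемое, либо свести утверждение к точным формулировкам из \cite{Kunen1980,vanMill1982Sixteen}, а не к лемме о рекурсии длины $2^{\mathfrak c}$, которой там нет.
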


\begin{proof}
The point $x_\es\in\oms$ defined in Section~3.1 of \cite{vanMill1982Sixteen} is as desired.
Its existence also follows from Propositions~1.4 and~2.6 of~\cite{Kunen1980}.
\end{proof}

\begin{lemma}\label{l:example:1}
Let $(\mu_n)_n\subset P(\om)$, and let $\ep>0$.
\begin{enumerate}
\item
There exists a finite set $A\subset \om$, a disjoint sequence $(A_j)_j$ of finite sets, 
and a subsequence $(n_j)_j$ such that $A\cap A_j=\es$ and $\mu_{n_j}(A\cup A_j)> 1-\ep$ for all $j\in\N$.
\item
Every $p\in\oms$ contains an element $S$ such that the set $\set{n\in \N: \mu_n(S)\leq\ep}$ is infinite.
\end{enumerate}
\end{lemma}

\begin{proof}
We prove (1).
Let $\al\om=\om\cup\sset{\infty}$ be the one-point compactification of the space $\om$. 
Since $P(\al\om)$ is a metric compact space, there exists a subsequence of $(\mu_n)_n$ that converges 
to some measure $\nu\in P(\al\om)$. We have $\nu=c\mu+(1-c)\de_\infty$ for some $c\in[0,1]$ and $\mu\in P(\om)$. 
Passing to a subsequence, we assume that $(\mu_n)_n$ converges to~$\nu$.

We construct the subsequence $(n_k)_k$ and the sets $(A_k)_k$ by induction.

If $c=0$, then we set $A=\es$. If $c>0$, then we take a finite $A\subset\om$ such that $\mu(A)> 1-\ep$.
We set $n_1=1$ and choose a finite $A_1\subset \om\setminus A$ such that $\mu_{n_1}(A\cup A_1)> 1-\ep$.

Suppose that $n_i$ and $A_i$ are constructed for $i<j$. We set $B_j=\bigcup_{i=1}^{j-1}A_j$. Since 
\[
\nu(B_j)=c\mu(B_j)\leq \mu(B_j) 1-\mu(\om\setminus B_j)\leq 1-\mu(A)<\ep,
\]
it follows that $W_j=W(\al\om\setminus B_j,1-\ep )$ is an open neighborhood of the measure $\nu$. The convergence 
of $(\mu_n)_n$ to $\nu$ implies the existence of an $n_j>n_{j-1}$ such that $\mu_{n_j}\in W_j$.
We have $\mu_{n_j}(B_j)< \ep$.
Choose a finite set $A_j'\subset \om\setminus B_j$ so that $\mu_{n_j}(A_j')> 1- \ep$.
Let $A_j=A_j'\setminus A$. Then $A_j\cap A=\es$, $A_i\cap A_i=\es$ for $i<j$, and $\mu_{n_j}(A\cup A_j)> 1- \ep$.

We prove (2). Using (1) and passing to a subsequence, we can assume that there exists a finite set $A\subset \om$
and a disjoint family $(A_n)_n$ of finite sets such that $\mu_{n}(A\cup A_n)> 1-\ep$ for all $n\in\N$.
We set $P=\bigcup_{i=1}^\infty A_{2n}$ and $Q=\bigcup_{i=1}^\infty A_{2n-1}$. Since $P\cap Q=\es$, 
it follows that either $P\notin p$ or $Q\notin p$.

Consider the case $P\notin p$; the case $Q\notin p$ is treated similarly.
Let $S=\om\setminus (A\cup P)$. Then $S\in p$ and $S\cap(A\cup A_{2n})=\es$ for $n\in\N$.
Since $\mu_{2n}(A\cup A_{2n})> 1-\ep$, we have $\mu_{2n}(S)<\ep$ for all $n\in\N$.
\end{proof}

The following statement is a generalization of Lemma~\ref{l:example:1}\,(1).

\begin{proposition}\label{p:l:example:1(1)}
Suppose given a space $X$, a sequence $(\mu_n)_n\subset P(X)$, and a number $\ep>0$. 
Then there exists a compact set  $K\subset X$, a disjoint sequence $(K_j)_j$ of compact sets, 
and a subsequence $(n_j)_j$ such that $K\cap K_j=\es$ and $\mu_{n_j}(K\cup K_j)> 1-\ep$ for all $j\in\N$.
\end{proposition}

\begin{proof}
Since $\mu_n$ is a Radon probability measure, there exists a $\si$-compact set $Y_n\subset X$ such that 
$\mu_n(Y_n)=1$. Let $Y=\bigcup_{n=1}^\infty Y_n$. Then the set $Y$ is $\si$-compact and $\mu_n(Y)=1$ for all 
$n\in\N$. If $Y$ is compact, then we set  $K=Y$ and $K_n=\es$. Consider the case where $Y$ is not compact. In this 
case, there exists an increasing (by inclusion) sequence $(C_n)_{n\in\om}$ of compact sets such that 
$\bigcup_{n=0}^\infty C_n=Y$, $S_0=C_0\neq\es$, and $S_n=C_{n}\setminus C_{n-1}\neq\es$ for $n>0$.

We define $\nu_n\in P(\om)$. Let $\nu_n(\{i\})=\mu_n(S_i)$ for $i\in\om$. Lemma~\ref{l:example:1}(1) 
implies the existence of a finite set $A\subset \om$, a disjoint sequence $(A_j)_j$ of finite sets,
and a subsequence $(n_j)_j$ such that $A\cap A_j=\es$ and $\nu_{n_j}(A\cup A_j)> 1-\ep$ for all $j\in\N$.

We can additionally assume that $A=\{0,1,\dots,m\}$ for some $m$. Indeed, $A\subset A'=\{0,1,\dots,m\}$ 
for some $m$. There exists an $N$ such that $A'\cap A_n=\es$ for $n\geq N$. We have $A'\cap A_j'=\es$ and 
$\nu_{n_j'}(A'\cup A_j')> 1-\ep$ for all $j\in\N$, where $n_j'=n_{N+j}$ and $A_j'=A_{N+j}$.

We set $K=C_m$  and  $K_j'=\bigcup\{S_i:i\in A_j\}$ for $j\in\N$. Then 
$\mu_{n_j}(K\cup K_j')=\nu_{n_j}(A\cup A_j)>1-\ep$,  $K_j'$ is a Borel set, and the measure $\mu_{n_j}$ is Radon; 
hence $\mu_{n_j}(K\cup K_j)>1-\ep$ for some compact $K_j\subset K_j'$.
\end{proof}

\begin{proposition}\label{p:example:3}
Suppose that $p\in\oms$, $X=\bt\om\setminus \sset p$, and $(\mu_n)_n\subset P(\om)$. Then there exists 
an infinite set $N\subset\N$ such that $\clx {P(X)}{\set{\mu_n:n\in N}}$ is compact.
\end{proposition}

\begin{proof}
Lemma~\ref{l:example:1}(2) implies the existence of sequences
$(N_k)_k$ and $(S_k)_k$ of infinite sets such that $N_{k+1}\subset N_k\subset\om$, $S_{k+1}\subset S_k\in p$, 
and $\mu_n(S_k)\leq \frac 1k$ for $k\in\N$ and $n\in N_k$. Take a sequence $(n_k)_k$ such that $n_k\in N_k$. 
We have $\mu_{n_k}(S_k)\leq \frac 1k$. Therefore, $\mu_{n_j} \notin W(U(S_k),\frac 1k)$ for $k\in \N$ and 
$j\geq k$. The set $F_k=P(\bt\om)\setminus W(U(S_k),\frac 1k)$ is closed in $P(\bt\om)$ and the set $\{j\in\N: \mu_{n_j}\notin F_k\}\subset\{1,2,...,k-1\}$ is finite. Consequently, $F_k$ contains the limit points $L$ of the sequence of measures $(\mu_{n_j})_j$.
The set $F=\bigcap_{k=1}^\infty F_k \subset P(\bt\om)$ is compact and $L\subset F$.

Let us show that $F\subset P(X)$. Take $\mu\in F$. Since $\mu\in F_k$ and $U(S_k)=\clx{\bt\om}{S_k}$, we have 
$\mu(\clx{\bt\om}{S_k})\leq \frac 1k$ for $k\in\N$. Hence $\mu(\{p\})=0$ and, consequently, $\mu\in P(X)$.

We set $N=\{n_j:j\in \N\}$ and $M=\set{\mu_n:n\in N}$. Since $\clx {P(\bt\om)}{M}= 
M\cup L\subset M\cup F\subset P(X)$, it follows that $\clx {P(X)}{\set{\mu_n:n\in N}}$ is compact.
\end{proof}

\begin{proposition}\label{p:example:4}
If $X=\bt\om\setminus \sset p$, where $p\in\oms$, then $P(X)$ is almost pseudo-$\om$-bounded.
\end{proposition}

\begin{proof}
Let $(U_n)_n$ be a sequence of nonempty open subsets of $P(X)$.
Since $P(\om)$ is dense in $P(\bt\om)$, there exists a $\mu_n\in U_n\cap P(\om)$.
Proposition~\ref{p:example:3} implies that $K=\cl {\set{\mu_n:n\in N}}$ is compact
for some infinite $N\subset \om$. Therefore,  the set $\set{n\in\N: U_n\cap K\neq\es}\supset N$ is infinite.
\end{proof}

\begin{proposition}\label{p:example:5}
Let $p\in\oms$ and $X=\bt\om\setminus\sset p$.
If $Y=\oms\setminus\sset p$ is \cccc/-bounded, then $P(X)$ is totally countably compact.
\end{proposition}

\begin{proof}
The space $P(Y)$ is $\om$-bounded by Proposition \ref{p:example:1}.
Let $(\ka_n)_n\subset P(X)$. Then $\ka_n=\la_n\mu_n+(1-\la_n)\nu_n$, where $\mu_n\in P(\om)$, $\nu_n\in P(Y)$, 
and $\la_n\in[0,1]$. Since $P(Y)$ is $\om$-bounded,  it follows that  $Q=\clx{P(Y)}{\set{\nu_n: n\in\N}}$ is 
compact. Proposition~\ref{p:example:3} implies that $S=\clx{P(X)}{\set{\mu_n: n\in N}}$ is compact for some 
infinite $N\subset \om$. Since
\[
K=\set{t\mu+(1-t)\nu: \mu\in S,\nu\in Q, t\in[0,1]}\subset P(X)
\]
is compact and $(\ka_n)_{n\in N}\subset K$,  it follows that $\clx{P(X)}{\set{\ka_n:n\in N}}$ is compact.
\end{proof}

Propositions~\ref{p:example:1}, \ref{p:example:2}, and~\ref{p:example:5} imply the following assertion.

\begin{proposition}\label{p:example:6}
There exists a $p\in\oms$ for which the following conditions hold:
\begin{enumerate}
\item
the space $\oms\setminus\sset p$ is \cccc/-bounded;
\item
$P(\oms\setminus\sset p)$ is $\om$-bounded and $P(\oms\setminus\sset p)=P_\be(\oms\setminus\sset p)$;
\item
$P(\bt\om\setminus \sset p)$ is totally countably compact.
\end{enumerate}
\end{proposition}

\begin{proposition}\label{p:example:7}
Let $X$ be any of the following spaces:
\begin{enumerate}
\item
the space $\om_1$ of all countable ordinals;
\item
the Tychonoff plane $\Pi_T=((\om_1+1)\times (\om+1))\setminus \sset{(\om_1,\om)}$;
\item
$X_{A,p}=[0.1]^A\setminus \sset p$, where $A$ is uncountable and $p\in [0,1]^A$.
\end{enumerate}
Then the spaces $X$ and $P(X)$ are pseudo-$\om$-bounded.
\end{proposition}

\begin{proof}
By Theorem~\ref{t:intro:other:3}, it suffices to show that $X$ is pseudo-$\om$-bounded.

Item (1) follows from Proposition~\ref{p:example:om1}.

(2) The space $\om_1$ is $\om$-bounded. Consequently, $(\om_1+1)\times \om$ is $\om$-bounded.
Since $(\om_1+1)\times \om$ is dense in $\Pi_T$,  it follows that  $\Pi_T$ is pseudo-$\om$-bounded.

(3) Let $p=(p_\al)_{\al\in A}$. Take $q=(q_\al)_{\al\in A}\in [0,1]^A$ such that $p_\al\neq q_\al$ for $\al\in A$. 
Let 
\[
\Si=\set{(x_\al)_{\al\in A}\in [0,1]^A: |\set{\al\in A: x_\al\neq q_\al}|\leq\om}.
\]
Then $\Si$ is dense in $X=[0,1]^A\setminus \sset p$ and $\om$-bounded. Therefore, $X$ is pseudo-$\om$-bounded.
\end{proof}

%%% end file('tex/sec/example.tex') %%%

\section{Mr\'owka Spaces}\label{sec:Mrowka}
%%% begin file('tex/sec/Mr\' owka.tex') %%%
Let $D$ be a countable set, $\cM=\set{M_\al: \al\in A}$ be a family of countable subsets of $D$, and 
$D\cap A=\es$. A family $\cM$ is called \term{almost disjoint} if $|M_\al\cap M_\be|<\om$ for distinct 
$\al,\be\in A$. 
%We denote $\ex M_\al=\sset \al \cup M_\al$.
We set $\Psi(D,\cM)=D\cup A$ and endow $\Psi(D,\cM)$ with the topology generated by the subbase
\[
\set{\sset u, \Psi(D,\cM)\setminus \sset u: u\in D} \cup \set{\sset\al\cup M_\al : \al\in A}.
\]
Spaces of the form $\Psi(D,\cM)$ are called \term{Mr\'owka--Isbell spaces} or \term{$\Psi$-spaces}.
The set $D$ is dense in $\Psi(D,\cM)$ and consists of isolated points. The set $A$ is discrete and closed in 
$\Psi(D,\cM)$. The space $\Psi(D,\cM)$ is locally compact. For $\al\in A$, the set $\sset\al\cup M_\al$ is 
a compact neighborhood of $\al$ and $M_\al$ is a sequence converging to~$\al$.

A family $\cM$ is said to be \term{maximal almost disjoint} if $\cM$ is an almost disjoint family and
for every infinite $M\subset D$, there exists an $\al\in A$ such that $|M\cap M_\al|=\om$.
A space $\Psi(D,\cM)$ is pseudocompact if and only if $\cM$ is a maximal almost disjoint family 
(see \cite{Hernandez-HernandezHrusak2018}, \cite[Section~11]{vanDouwen1984handbook}).

A family $\cM$ is called a \term{Mr\'owka family} if the Stone--\v Cech remainder 
$\bt \Psi(D,\cM) \setminus \Psi(D,\cM)$ is a singleton.
Mr\'owka--Isbell spaces $\Psi(D,\cM)$ with one-point Stone--\v Cech remainder are called \term{Mr\'owka spaces}.

\begin{note}
Many authors consider only $D=\om$  
\see\cite{Hernandez-HernandezHrusak2018} and non-indexed families $\cM$ (see \cite{Hernandez-HernandezHrusak2018}
and \cite[Section~11]{vanDouwen1984handbook}). However, for constructing a Mr\'owka family, it is more 
convenient to use indexed families.
\end{note}

We first recall the original construction of the Mr\'owka family and then describe a modification necessary 
for constructing a Mr\'owka space for which the space of probability measures is not pseudocompact.

We set $\nb=\sset{0,1}$.
As the set $D$ of isolated points, we use the set
\[
\ct = \bigcup_{n=0}^\infty \nb^n.
\]
The set $\ct$ is a Cantor tree and consists of finite binary sequences.
The zeroth power $\nb^0$ consists of the empty set, which is considered a sequence of length zero.
We will consider the set $\cs$ of infinite binary sequences with the product topology
of discrete two-point spaces $\nb$. The space $\cs$ is homeomorphic to the Cantor set.

For $x=(x_i)_i\in \cs$, we set
\[
C_x = \set{ x\rst n : n\in\om },\qquad \text{where }\ x\rst n = (x_0,x_1,\dots,x_{n-1}).
\]

\begin{proposition}[{\cite{Mrowka1977}, \cite[Section 8.6]{Hernandez-HernandezHrusak2018}}]
\label{p:Mrowka:1}
There exists a family of sets 
$\set{D_x:x\in \cs}$
satisfying the condition
\begin{itemize}
\item[\cms 1] $C_x\subset D_x \subset \ct$ for $x\in\cs$ and $\set{D_x:x\in \cs}$ is 
a maximal almost joint family.
\end{itemize}
\end{proposition}

\begin{proposition}[{\cite{Mrowka1977}, \cite[Section 8.6]{Hernandez-HernandezHrusak2018}}]\label{p:Mrowka:2}
There exists a family $\cP$ of sets satisfying the condition 
\begin{itemize}
\item[\cms 2]
$|P|=2$ for $P\in\cP$, the family $\cP$ is a partition of the space $\cs$, 
and for any disjoint closed uncountable sets $F_1,F_2\subset \cs$
there exists $P\in\cP$ such that $P\cap F_1\neq\es$ and $P\cap F_2\neq\es$.
\end{itemize}
\end{proposition}

\begin{proposition}[{\cite{Mrowka1977}, \cite[Section 8.6]{Hernandez-HernandezHrusak2018}}]\label{p:Mrowka:3}
Suppose given a family 
$\set{D_x:x\in\cs}$
satisfying condition \cms 1
and a family $\cP$ satisfying condition \cms 2.
Let $E_P=\bigcup_{x\in P} D_x$ for $P\in\cP$. Then $\cE=\set{E_P:P\in\cP}$ is a Mr\'owka family
and the space $\Psi(\ct,\cE)$ is a Mr\'owka space.
\end{proposition}

Propositions \ref{p:Mrowka:1}, \ref{p:Mrowka:2}, and \ref{p:Mrowka:3} imply the existence of a Mr\'owka space.
To construct the required Mr\'owka space, we need the following modification of Proposition~\ref{p:Mrowka:1}.

\begin{proposition}\label{p:Mrowka:4}
There exists a family of sets $\set{D_x:x\in \cs}$
which satisfies condition~\cms 1 and the following condition:
\begin{itemize}
\item[\cms 3] $|D_x\cap \nb^n|\leq 2$ for each $x\in\cs$ and $n\in \om$.
\end{itemize}
\end{proposition}

\begin{proof}
Let $\cL=\set{L\subset \ct: |L \cap \nb^n|\leq 1\text{ for }n\in\om}$. Then $\cC=\set{C_x:x\in\cs}\subset\cL$. 
We set 
\[
\gM=\set{\cM\subset \cL: \cC\subset\cM\text{ and }\cM\text{ is an almost disjoint family}}.
\]
Since the union of any chain in $\gM$ belongs to $\gM$, it follows by Zorn's lemma that $\gM$ contains 
an inclusion maximal element~$\cM$.

Let us show that $\cM$ is a maximal almost disjoint family. Take an indinite $S\subset \ct$.
There exists a set $Q\subset S$ such that $|Q\cap \nb^n|=1$ if $S\cap \nb^n\neq\es$. We have $Q\in \cL$.
Since $\cM$ is maximal in $\gM$,  it follows that  $Q\cap M$ is infinite for some $M\in\cM$. 
Therefore, $S\cap M$ is infinite.

We set $\cR=\cM\setminus \cC$. Since $|\cR|\leq 2^\om$, we have $|\cR|=|A|$ for some $A\subset \cs$.
We index the family $\cR$ by the elements of $A$: $\cR=\set{R_x: x\in A}$. For $x\in \cs$, we set
\[
D_x=\begin{cases}
C_x\cup R_x& \text{if }x\in A,
\\
C_x& \text{if }x\in \cs\setminus A.
\end{cases}
\]
The family $\set{D_x:x\in \cs}$ is as desired.
\end{proof}

\begin{theorem}\label{t:Mrowka:1}
There exists a Mr\'owka space $X$ for which $P(X)$ is not pseudocompact.
\end{theorem}

\begin{proof}
Let $\set{D_x:x\in \cs}$ be a family of sets satisfying conditions \cms 1 and \cms 3 
(see Proposition~\ref{p:Mrowka:4}), and let $\cP$ be a family of sets satisfying condition \cms 2 (see 
Proposition~\ref{p:Mrowka:2}).
We set $E_P=\bigcup_{x\in P} D_x$ for $P\in\cP$. By Proposition~\ref{p:Mrowka:3},
the family $\cE=\set{E_P:P\in\cP}$ is a Mr\'owka family and the space $X=\Psi(\ct,\cE)$ is a Mr\'owka space.
It follows from \cms 2 and \cms 3 that 
\begin{itemize}
\item[\cms 4] $|E_P\cap \nb^n|\leq 4$ for each $P\in\cP$ and $n\in \om$.
\end{itemize}

\vskip 0.3em

Let us show that $P(X)$ is not pseudocompact.
For $n\in\om$, we set
\begin{equation}\label{eq:Mrowka:un}
U_n=\set{\nu\in P(X): \nu(\sset u)> \tfrac 1{2^n}(1-\tfrac 1{2^n})\text{ for }u\in\nb^n}.
\end{equation}
Since $U_n=\bigcap\set{W(\sset u,\tfrac 1{2^n}(1-\tfrac 1{2^n})): u\in\nb^n}$, 
it follows that $U_n$ is open in $P(X)$.
Let us show that the sequence $(U_n)_n$ does not accumulate to any point in $P(X)$. Assume that, on the contrary, 
$(U_n)_n$ accumulates to some $\mu\in P(X)$. Since $X$ is a scattered space,
we have $\mu(\sset u)=c>0$ for some $u\in X$. Consider two cases.

The first case: $u\in\ct$. In this case, $u\in\nb^m$ for some $m\in\om$. Let $U=W(\sset u,\frac c2)$.
The set $U$ is an open neighborhood of the measure $\mu$. Take $N\in\N$ such that $\frac 1{2^N}<\frac c2$ 
and $N>m$. Let $n>N$, and let $\nu\in U_n$. Since $\nu(\nb^n)>1-\frac 1{2^n}$, we have 
$\nu(\sset u)<\frac 1{2^n}<\frac 1{2^N}<\frac c2$.
Consequently, $U_n\cap U=\es$ for $n>N$. This contradicts the assumption that $(U_n)_n$ accumulates to~$\mu$.

The second case: $u\in\cP$. Let $U=W(S,\frac c2)$, where $S=\sset u\cup E_u$. The set $U$ is an open 
neighborhood of the measure $\mu$.
Take $N\in\N$ such that
\begin{equation}\label{eq:Mrowka:1}
(2^n-4)\tfrac 1{2^n}(1-\tfrac 1{2^n}) > 1- \tfrac c2
\end{equation}
for all $n>N$. Let $n>N$ and $\nu\in U_n$. From \cms 4 it follows that $|\nb^n\cap S|\leq 4$,
and from (\ref{eq:Mrowka:un}) and (\ref{eq:Mrowka:1}) it follows that $\nu(\nb^n\setminus S)>1- \tfrac c2$.
Consequently, $\nu(S)< \tfrac c2$ and $\nu\notin U$. This means that $U_n\cap U=\es$ for $n>N$.
This contradicts the assumption that $(U_n)_n$ accumulates to~$\mu$.
\end{proof}

%%% end file('tex/sec/Mr\' owka.tex') %%%

\section{Factorization Theorems}\label{sec:ft}
%%% begin file('tex/sec/ft.tex') %%%
Let $n\in \N$, and let $X_1$, $X_2$, \dots, $X_n$ be spaces. We say that a product $\prod_{i=1}^n X_i$ is 
\term{$\R$-factorizable} if, given any continuous function $f\: \prod_{i=1}^n X_i\to\R$, there exist 
separable metrizable spaces $Y_i$ and continuous mappings $g_i\: X_i\to Y_i$ for $i=1,2,\dots,n$ and a continuous
function $h\: \prod_{i=1}^n Y_i\to \R$ such that $f=h \circ \prod_{i=1}^n g_i$.
The definition of \term{weak} \term{$\R$-factorizability} is obtained by removing the continuity 
requirement on~$h$.

\begin{note}
The concept of $\R$-factorizable products arose in the study of $\R$-factorizable groups
\see\cite{ReznichenkoSipacheva2013,Reznichenko2024rf}. A topological group $G$ is said to be 
\term{$\R$-factorizable} if, for any continuous function $f\: G\to \R$, there exist a separable metrizable 
topological group $G$, a continuous homomorphism $g\: G\to H$, and a continuous function $h\: H\to \R$ 
such that $f=h\circ g$. If a product $G\times H$
of topological groups is $\R$-factorizable as a group, then $G\times H$ is $\R$-factorizable
as a product \see\cite{ReznichenkoSipacheva2013,Reznichenko2024rf}.
Chapter~8 of the monograph \cite{at2009} considers several multiplicative classes of $\R$-factorizable groups,
which gives additional examples of $\R$-factorizable products. The class of all $\R$-factorizable groups
is not multiplicative \see\cite{Reznichenko2024rf,Sipacheva2023-2}. In \cite{Reznichenko2024efualg} \ 
$\R$-factorizable products were used to study topological universal algebras.
\end{note}

\begin{assertion}\label{a:ft:d1}
A product $\prod_{i=1}^n X_i$ is $\R$-factorizable if and only if, given any separable metrizable
space $Z$ and any continuous mapping $f\: \prod_{i=1}^n X_i\to Z$, there exist separable metrizable
spaces $Y_i$ and continuous mappings $g_i\: X_i\to Y_i$ for $i=1,2,\dots,n$ and 
a continuous mapping $h\: \prod_{i=1}^n Y_i\to Z$ such that $f=h \circ \prod_{i=1}^n g_i$.
A characterization of weak $\R$-factorizability is obtained by removing the continuity requirement on~$h$.
\end{assertion}

\begin{proof}
The implication $(\larr)$ is obvious. Let us prove $(\rarr)$. We will assume that $Z\subset \R^\N$. 
Let $f_m=\pi_m\circ f$, where $\pi_m$ is the projection of $\R^\N$ onto the $m$th factor. Since 
$\prod_{i=1}^n X_i$ is (weakly) $\R$-factorizable,  it follows that 
there exist separable metrizable spaces $Y_{m,i}$ and continuous mappings $g_{m,i}\: X_i\to Y_{m,i}$ 
for $i=1,2,\dots,n$ and 
a continuous (not necessarily continuous) function $h_m\: \prod_{i=1}^n Y_{m,i}\to \R$ 
such that $f_m=h_m \circ \prod_{i=1}^n g_{m,i}$.
We set $g_i=\diag_{m=1}^\infty g_{m,i}$ and $Y_i=g_i(X_i)$ and let $p_{m,i}: Y_i\to Y_{m,i}$ be the projection. 
We also set 
\[{
\def\({\left(\vphantom{\prod}\right.}
\def\){\left.\vphantom{\prod}\right)}
h=\prod_{m=1}^\infty \( h_m \circ \prod_{i=1}^n p_{m,i}\): \prod_{i=1}^n Y_i \to Z\subset \R^\N.
}\]
Then $f=h \circ \prod_{i=1}^n g_i$.
\end{proof}

\begin{assertion}\label{a:ft:d2}
Let $X_0$ be a metrizable compact set. The product $\prod_{i=0}^n X_i$ is $\R$-factorizable
if and only if so is the product $\prod_{i=1}^n X_i$. A similar assertion holds for weak $\R$-factorizability.
\end{assertion}
\begin{proof}
The implication $(\rarr)$ is obvious. We prove $(\larr)$.
Let $f\: \prod_{i=0}^n X_i\to \R$ be a continuous function.
We denote by $C_b(X_0)$ the space of continuous functions on $X_0$ with the topology of uniform convergence. Let
\[
\ph: \prod_{i=1}^n X_i\to C_u(X_0),\ \ph((x_1,\dots,x_n))(x_0) = f((x_0,x_1,\dots,x_n)).
\]
The mapping $\ph$ is continuous and $C_u(X_0)$ is metrizable and separable. Proposition~\ref{a:ft:d1} 
implies the existence of separable metrizable spaces $Y_i$ and continuous mappings $g_i\: X_i\to Y_i$ for 
$i=1,2,\dots, n$ and 
a continuous (not necessarily continuous) mapping $\psi\: \prod_{i=1}^n Y_i\to C_u(X_0)$ such that 
$\ph=h \circ \prod_{i=1}^n g_i$. We set $Y_0=X_0$,  $g_0=\id_{X_0}$, and 
\[
h: \prod_{i=0}^n Y_i\to \R,\ (y_0,y_1,...,y_n) \mapsto \psi((y_1,...,y_n))(x_0).
\]
Then $f=h \circ \prod_{i=0}^n g_i$.
\end{proof}

\begin{assertion}[{\cite[Proposition 12]{Reznichenko2024efualg}}]\label{a:ft:1}
If $\prod_{i=1}^n X_i$ is Lindel\"of, then the product $\prod_{i=1}^n X_i$ is $\R$-factorizable.
\end{assertion}

\begin{cor}\label{c:ft:1}
If $X_i$ are compact for $i=1,\dots,n$, then the product $\prod_{i=1}^n X_i$ is $\R$-factorizable.
\end{cor}

\begin{assertion}\label{a:ft:2}
If $\prod_{i=1}^n X_i$ is pseudocompact, then the product $\prod_{i=1}^n X_i$ is $\R$-factorizable.
\end{assertion}

\begin{proof}
Let $f\: \prod_{i=1}^n X_i\to\R$ be a continuous function.
From Gliksberg's theorem \see\cite{gli1959} it follows that $\bt(\prod_{i=1}^n X_i)=\prod_{i=1}^n \bt X_i$.
Consequently, the function $f$ extends to a continuous function $\hat f: \prod_{i=1}^n \bt X_i \to \R$.
By Corollary~\ref{c:ft:1} there exist separable metrizable spaces $Y_i$
and continuous mappings $\hat g_i: \bt X_i\to Y_i$ for $i=1,2,\dots, n$ and 
a continuous function $h\: \prod_{i=1}^n Y_i\to \R$ such that $\hat f=h \circ \prod_{i=1}^n \hat g_i$.
Let $g_i = \restr{\hat g_i}{Y_i}$ for $i=1,\dots,n$. Then $f=h \circ \prod_{i=1}^n g_i$.
\end{proof}

\begin{assertion}\label{a:ft:3}
If $X_i$ is pseudocompact and $\om_1$ is a precaliber of $X_i$ for $i=1,2,\dots,n$, then the product 
$\prod_{i=1}^n X_i$ is weakly $\R$-factorizable.
\end{assertion}

\begin{proof}
Let $f\: \prod_{i=1}^n X_i\to\R$ be a continuous function.
From \cite[Theorem 4]{Reznichenko2022} it follows that the function $f$ extends to a separately
continuous function $\hat f\: \prod_{i=1}^n \bt X_i\to \R$. 
According to \cite[Theorem 1.10]{ReznichenkoUspenskij1998},
there exist separable metrizable spaces $Y_i$ and continuous mappings $\hat g_i\: \bt X_i\to Y_i$ 
for $i=1,2,\dots,n$ and
a separately continuous function $h\: \prod_{i=1}^n Y_i\to \R$ such that $\hat f=h \circ \prod_{i=1}^n \hat g_i$.
Let $g_i = \restr{\hat g_i}{Y_i}$ for $i=1,\dots,n$. Then $f=h \circ \prod_{i=1}^n g_i$.
\end{proof}

Consider the $(n-1)$-dimensional simplex
\[
S_n=\set{(\la_i)_i\in \R^{n}: \sum_{i=1}^{n}\la_i=1,\ x_i\geq 0\text{ for }i=1,2,...,n}.
\]
The space $S_n$ is metrizable and compact. Given a space $X$, we introduce the mapping
\[
\ph^X_n: S_n \times X^n \to P_n(X),\ ((\la_1,\la_2,...,\la_n),x_1,x_2,...,x_n) \mapsto \sum_{i=1}^n \la_i \de_{x_i}.
\]
The mapping $\ph^X_n$ is continuous.

\begin{proposition}\label{p:ft:1}
Suppose given $n\in\N$, a space $X$ for which $X^n$ is weakly $\R$-factorizable, and a continuous 
function $f\: P_n(X)\to \R$.
Then there exists a separable metrizable space $Y$, a continuous surjective mapping $g\: X\to Y$, and 
a mapping $h\: P_n(Y)\to\R$ such that $f=h \circ P(g)$.
\end{proposition}

\begin{proof}
Proposition \ref{a:ft:d2} implies that $S_n\times X^n$ is weakly $\R$-factorizable.
We set $\vt f=f\circ \ph^X_n\: S_n \times X^n\to\R$,  $X_0=S_n$, and $X_i=X$ for $i=1,2,\dots,n$.
Since the product $\prod_{i=0}^n X_i$ is weakly $\R$-factorizable,  it follows that  there exist 
separable metrizable spaces $Y_i$ and 
continuous mappings $g_i\: X_i\to Y_i$ for $i=0,1,\dots,n$  and a function $s\: \prod_{i=0}^n Y_i\to \R$ 
such that $\vt f=s \circ \prod_{i=0}^n g_i$. We set 
\[
g=\diag_{i=1}^n g_i: X\to \prod_{i=1}^n Y_i, 
\]
$Y=g(X)$,  and $\pi_0=g_0$  and let $\pi_i\: Y\to Y_i$ be the projections for $i=1,2,\dots,n$.

Let $\vt h = s \circ \prod_{i=0}^n \pi_i: S_n\times Y^n\to \R$. Then $\vt f=\vt h\circ (\id_{S_n}\times g^n)$. 
Hence 
\begin{equation}\label{eq:ft:1}
{
\def\({\left(\vphantom\sum\right.}
\def\){\left.\vphantom\sum\right)}
f\(\sum_{i=1}^n\la_i\de_{x_i}\) = f\(\sum_{i=1}^n\la_i\de_{u_i}\)
\text{ if }g(x_i)=g(u_i)
}
\end{equation}
for $i=1,\dots,n$ and $(\la_1,\dots,\la_n)\in S_n$.
Let $\xi\: Y\to X$ be a mapping such that $g(\xi(y))=y$ for $y\in Y$. We set 
{
\def\({\left(\vphantom\sum\right.}
\def\){\left.\vphantom\sum\right)}
\[
h\(\sum_{i=1}^n\la_i\de_{y_i}\) = f\(\sum_{i=1}^n\la_i\de_{\xi(y_i)}\)
\]
}
for $\sum_{i=1}^n\la_i\de_{y_i}\in P_n(Y)$.

Let us show that $f=h \circ P(g)$. Take $\mu=\sum_{i=1}^n\la_i\de_{x_i}\in P_n(X)$. Then
$P(g)(\mu)=\sum_{i=1}^n\la_i\de_{g(x_i)}$ and $h(P(g)(\mu))=f(\sum_{i=1}^n\la_i\de_{u_i})$,
where $u_i=\xi(g(x_i))$ for $i=1,2,\dots,n$. Since $g(x_i)=g(u_i)$, it follows from (\ref{eq:ft:1}) 
that $f(\mu)=h(P(g)(\mu))$.
\end{proof}

\begin{theorem}\label{t:ft:1}
Let $X$ be a space such that $X^n$ are weakly $\R$-factorizable for all $n\in \N$, and let 
$f\: P_f(X)\to \R$ be a continuous function.
Then there exists a separable metrizable space $Y$, a continuous surjective mapping $g\: X\to Y$, and
a mapping $h\: P_f(Y)\to\R$ such that $f=h \circ P(g)$.
\end{theorem}

\begin{proof}
We set $f_n=\restr f{P_n(X)}$ for $n\in\N$. Since $X^n$ is weakly $\R$-factorizable, it follows by 
Proposition~\ref{p:ft:1} that there exists a separable metrizable space $Y_n$, a continuous surjective mapping 
$g_n\: X\to Y_n$, and a mapping $h_n\: P_n(Y_n)\to\R$ such that $f_n=h_n \circ P(g_n)$.
We set $g=\diag_{n=1}^\infty g_n$ and $Y=g(X)$ and let $\pi_n\: Y\to Y_n$ be the projection. 
For $\mu\in P_n(Y)\subset P_f(Y)$, we set $h(\mu)=h_n(P(\pi_n)(\mu))$.
The mapping $h\: P_f(Y)\to \R$ is well defined in view of the equality $f_n=h_n\circ P(g_n)$, and 
$f=h \circ P(g)$.
\end{proof}

The following proposition is easy to verify.

\begin{proposition}\label{p:ft:1+1}
For any sets  $X$, $Y$, and $Z$, mapping $f\: X\to Z$, and surjective mapping $g\: X\to Y$,
the following conditions are equivalent:
\begin{enumerate}
\item
there exists an $h\: Y\to Z$ such that $f=h\circ g$;
\item
$|f(g^{-1}(y))|=1$ for $y\in Y$;
\item
if $u,v\in X$ and $g(u)=g(v)$, then $f(u)=f(v)$.
\end{enumerate}
\end{proposition}

Let $g\: X\to Y$ be a continuous mapping, and let $S\subset X$. We say that $S$ is \term{$g$-dense}
or \term{dense with respect to the mapping $g$} if, for any open $U,V\subset X$ 
such that $g(U)\cap g(V)\neq\es$, we have $g(U\cap S)\cap g(V\cap S)\neq\es$.

\begin{proposition}\label{p:ft:2}
Suppose given a continuous mapping $g\: X\to Y$, a  $g$-dense set  $S\subset X$, and a continuous 
function  $f\: X\to\R$. If there exists a function $h_S\: g(S)\to \R$ such that $\restr fS= h_S\circ \restr gS$,
then $h_S$ extends to a function $h\: Y\to \R$ such that $f=h\circ g$.
\end{proposition}

\begin{proof}
In view of Proposition~\ref{p:ft:1+1}, it suffices to check that if $u,v\in X$ and $g(u)=g(v)$, then $f(u)=f(v)$. 
Assume that, on the contrary, $f(u)\neq f(v)$. Then $f(U)\cap f(V)=\es$ for some neighborhoods $U$ and $V$ 
of the points $u$ and $v$.
Since $g(u)=g(v)\in g(U)\cap g(V)\neq\es$, it follows from the $g$-density of $S$ 
that $g(\vt u)=g(\vt v)$ for some $\vt u\in U\cap S$ and $\vt v\in V\cap S$. We have 
\[
f(\vt u)=h_S(g(\vt u))=h_S(g(\vt v))=f(\vt v).
\]
Therefore, $f(\vt u)=f(\vt v)\in f(U)\cap f(V)$.
This contradicts the assumption $f(U)\cap f(V)=\es$. 
\end{proof}

\begin{proposition}\label{p:ft:3}
Let $g\: X\to Y$ be a continuous surjective map of compact spaces.
Then the set $P_f(X)$ is dense with respect to the map $P(g)\: P(X)\to P(Y)$.
\end{proposition}

\begin{proof}
Let $U,V\subset P(X)$ be open sets for which $P(g)(U) \cap P(g)(V)\neq\es$. We must show that
\[
P(g)(P_f(X)\cap U) \cap P(g)(P_f(X)\cap V)\neq\es.
\]
Take $\mu\in U$ and $\nu\in V$ for which $P(g)(\mu)=P(g)(\nu)$. There exist $m,n\in\N$,
functionally open sets $U_1$, \dots, $U_m$, $V_1$, \dots, $V_n$, and positive numbers $a_1$, \dots, $a_m$, 
$b_1$, \dots, $b_n$ such that
\begin{align*}
\mu&\in W((U_i,a_i)_{i=1}^m)\subset U,
&
\nu&\in W((V_i,b_i)_{i=1}^n)\subset V.
\end{align*}
We set $\cU=\set{U_i: i=1,\dots,m}$, $\cV=\set{V_i: i=1,\dots,n}$,
\begin{align*}
\cS &= \prt{\cU\cup \cV},
&
\cP &= \prt{\set{ g(S): S\in\cS}},
\\
\cR &= \set{ g^{-1}(P): P\in\cP},
&
\cQ &= \prt{\cS\cup \cR}.
\end{align*}
The partitions $\cS$, $\cP$, $\cR$, and $\cQ$ are finite, the partition $\cS$ consists of $\si$-compact sets,
and the partitions $\cP$, $\cR$, and $\cQ$ consist of Borel sets. Since $\cS$ and $\cR$ are partitions, we have
\[
\cQ = \set{S\cap R: S\in\cS,\ R\in \cR}.
\]
For $P\in\cP$, we set
\[
\cQ_P = \set{Q\in \cQ: g(Q)\subset P}=\set{S\cap g^{-1}(P): S\in\cS,\ S\cap g^{-1}(P)\neq\es}.
\]
Since $g(S)=\bigcup\set{P\in\cP: P\subset g(S)}$ for $S\in\cS$,  it follows that 
\[
\cQ_P = \set{S\cap g^{-1}(P): S\in\cS,\ P\subset g(S)}.
\]
Therefore, $g(Q)=P$ for $Q\in \cQ_P$. For $P\in\cP$ and $Q\in\cQ_P$, we choose $y_P\in P$
and $x_Q\in Q$ so that $g(x_Q)=y_P$. We set 
\begin{align*}
\vt \mu &= \sum_{Q\in \cQ} \mu(Q)\de_{x_Q},
&
\vt \nu &= \sum_{Q\in \cQ} \nu(Q)\de_{x_Q}.
\end{align*}
Clearly, $\vt \mu, \vt \nu\in P_f(X)$. By Proposition~\ref{a:tms:1}, we have 
\begin{align*}
\vt\mu&\in W((U_i,a_i)_{i=1}^m)\subset U,
&
\vt\nu&\in W((V_i,b_i)_{i=1}^n)\subset V.
\end{align*}
Since $P(g)(\mu)=P(g)(\nu)$,  it follows that $\mu(g^{-1}(P))=\nu(g^{-1}(P))$ for $P\in\cP$.
Hence
\[
P(g)(\vt\mu) = \sum_{P\in \cP} \mu(g^{-1}(P))\de_{y_P} = \sum_{P\in \cP} \nu(g^{-1}(P))\de_{y_P} = P(g)(\vt\nu).
\]
\end{proof}

\begin{theorem}\label{t:ft:0}
Let $X$ be a compact space, and let $R\subset P(X)$ be a Lindel\"of subspace.
If $f\: R\to \R$ is a continuous function, then there exists a metrizable compact space $Y$,
a continuous surjective mapping $g\: X\to Y$, and a continuous mapping $h\: P(g)(R)\to\R$ such that
\begin{align}\label{eq:ft:0:t}
f=h \circ \restr{P(g)}R.
\end{align}
\end{theorem}

\begin{proof}
Since the family of continuous functions $\set{\hat q:q\in C^*(X)}$ on $P(X)$ generates the topology of $P(X)$, 
it follows that the diagonal product
\[
j=\diag_{q\in C^*(X)} \vh q: P(X) \to \R^{C^*(X)}
\]
is a topological embedding. We set 
\begin{align}\label{eq:ft:0:-2}
\wt R&=j(R), & \vt f&=f\circ \restr{j^{-1}}{\wt R}.
\end{align}
Then $\wt R\subset \R^{C^*(X)}$ is Lindel\"of, and Tkachenko's lemma \cite[Lemma~8.1.4]{at2009}
implies the existence of a countable set $\cA\subset C^*(X)$ and a continuous mapping 
$\vt h\: \pi_\cA(\wt R)\to \R$ such that
\begin{align}\label{eq:ft:0:-3}
\vt f&=\vt h \circ \restr{\pi_\cA}{\wt R},
\end{align}
where $\pi_\cA: \R^{C^*(X)} \to \R^\cA$ is the projection.

We set $g=\pi_\cA\circ j\circ \de^X=\diag\cA$ and $Y=g(X)$. For $q\in\cA$, let $p_q\:Y\to\R$, 
$(x_s)_{s\in\cA}\mapsto x_q$, be the projection onto the $q$th coordinate, and let 
\[
r\: P(Y) \to \R^\cA,\ \mu \mapsto (\vh p_q(\mu))_{q\in\cA}.
\]
Then
\begin{align}\label{eq:ft:0:-4}
r\circ P(g)&=\pi_\cA \circ j.
\end{align}
We set 
\begin{align}\label{eq:ft:0:-1}
h &= \vt h \circ \restr r{P(g)(R)}.
\end{align}
Let us check (\ref{eq:ft:0:t}).
Since (\ref{eq:ft:0:-2}) and (\ref{eq:ft:0:-3}),  it follows that 
\begin{align}\label{eq:ft:0:2}
f &= \vt h \circ \pi_\cA \circ \restr{j}{R}.
\end{align}
Relations (\ref{eq:ft:0:-1}) and (\ref{eq:ft:0:-4}) imply 
\begin{align}\label{eq:ft:0:3}
h \circ \restr{P(g)}R &=
\vt h \circ r \circ \restr{P(g)}R
= \vt h \circ \pi_\cA \circ \restr jR, 
\end{align}
and relations (\ref{eq:ft:0:2}) and (\ref{eq:ft:0:3}) imply~(\ref{eq:ft:0:t}).
\end{proof}

Theorem~\ref{t:ft:0} implies the following assertion for $R=P(X)$.

\begin{theorem}\label{t:ft:0+1}
Let $X$ be a compact space. Then, for any continuous function 
$f\: P(X)\to \R$, then there exists a metrizable compact space $Y$,
a continuous surjective mapping $g\: X\to Y$, and a continuous mapping $h\: P(Y)\to\R$ such that
\begin{align*}
f=h \circ P(g).
\end{align*}
\end{theorem}

\begin{proposition}\label{p:ft:5}
Suppose given  a compact space $X$,  a $G_\de$-dense set $S\subset X$, and a metrizable compact set $Y$.
Let $g\: X\to Y$ be a continuous surjective mapping.
Then the set $P_f(S)$ is dense with respect to the mapping $P(g)\: P(X)\to P(Y)$.
\end{proposition}

\begin{proof}
Let $U,V\subset P(X)$ be open sets. Suppose that $\mu\in U$, $\nu\in V$, and $P(g)(\mu)=P(g)(\nu)$.
It suffices to verify that $P(g)(\vt\mu)=P(g)(\vt\nu)$ for some $\vt\mu\in P_f(S)\cap U$ and 
$\vt\nu \in P_f(S)\cap V$.

If $U\cap V\neq\es$, then the density of $P_f(S)$ in $P(X)$ implies the existence of an 
$\eta\in P_f(S)\cap U\cap V$ for which $P(g)(\eta)=P(g)(\eta)$.
Suppose that $U\cap V=\es$. Let $f$ be a continuous function on $P(X)$ such that 
$\mu\in f^{-1}((-\infty,0))\subset U$
and $\nu\in f^{-1}((0,+\infty))\subset V$. Theorem~\ref{t:ft:0+1} implies the existence of a
metrizable compact space $Z_*$, a continuous surjective mapping $q_*\: X\to Z_*$, and a continuous mapping 
$h_*\: P(Z_*)\to\R$ such that $f=h_* \circ P(q_*)$.
We set $q=g\diag q_*\: X\to Y\times Z_*$, $Z=q(X)$,
\begin{align*}
p &\: Z\to Y,\ (y,z_*) \mapsto y,
\\
p_* &\: Z\to Z_*,\ (y,z_*) \mapsto z_*,
\end{align*}
and $h=h_* \circ P(p_*): P(Z)\to\R$. Then the compact set $Z$ is metrizable, $g=p \circ q$, 
and $f=h\circ P(q)$, i.e. the following diagram is commutative:
\[{
% A \arrow[r] \arrow[d] \arrow[dr, phantom, "q", very near start]
%\shorthandoff{"}
\begin{tikzcd}
&\R
\\
P(X) 
\arrow[rr,"P(q)"] 
\arrow[ru,"f"]
&& P(Z) 
\arrow[lu,"h"']
\\
X \arrow[rr,"q"] \arrow[rd,"g"'] \arrow[u,"\de_X"]
&& Z \arrow[ld,"p"] \arrow[u,"\de_Z"']
\\
&Y
\end{tikzcd}
%\shorthandon{"}
}\]
We set $\wt U=h^{-1}((-\infty,0))$ and $\wt V=h^{-1}((0,+\infty))$.
Then
\begin{align}\label{eq:ft:uv}
\mu &\in P(q)^{-1}(\wt U)\subset U,
&
\nu &\in P(q)^{-1}(\wt V)\subset V.
\end{align}

\begin{lemma}\label{l:ft:1}
$P(q)(P_f(S))=P_f(Z)$.
\end{lemma}

\begin{proof}
Let $\ka=\sum_{i=1}^n \la_i \de_{z_i}\in P_f(Z)$.
Note that the sets $q^{-1}(z_i)$ are of type $G_\de$ in $X$, because $Z$ is metrizable. 
Since $S$ is $G_\de$-dense in $X$,  it follows that  there exist $x_i\in S\cap q^{-1}(z_i)$.
We set $\vt \ka=\sum_{i=1}^n \la_i \de_{x_i}$. Then $P(q)(\vt\ka)=\ka$ and $\vt \ka\in P_f(S)$.
\end{proof}

By Proposition \ref{p:ft:3}, the set $P_f(Z)$ is $P(p)$-dense.
Since $P(g)(\mu)=P(p)(P(q)(\mu))\in P(p)(\wt U)\cap P(p)(\wt V)\neq\es$ and $P_f(Z)$ is $P(p)$-dense, 
it follows that 
$P(p)(\mu_*)=P(p)(\nu_*)$ for some $\mu_*\in \wt U\cap P_f(Z)$ and $\nu_*\in \wt V\cap P_f(Z)$.
By Lemma~\ref{l:ft:1},
there exists a $\vt \mu\in P(q)^{-1}(\mu_*)\cap P_f(S)$ and a $\vt \nu\in P(q)^{-1}(\nu_*)\cap P_f(S)$.
We have $\vt \mu,\vt \nu\in P_f(S)$ and 
\begin{gather*}
P(g)(\vt\mu)=P(p)(P(q)(\vt\mu))=P(p)(\mu_*)=P(p)(\nu_*)
\\
=P(p)(P(q)(\vt\nu))=P(g)(\vt\nu).
\end{gather*}
Since $P(q)(\vt\mu)=\mu_*\in \wt U$ and $P(q)(\vt\nu)=\nu_*\in \wt V$,  it follows from (\ref{eq:ft:uv}) 
that $\vt\mu\in U$ and $\vt\nu\in V$.
\end{proof}

\begin{theorem}\label{t:ft:2}
Let $X$ be a pseudocompact space such that $X^n$ are weakly $\R$-factorizable for all $n\in\N$ and 
$P_f(X)\subset R\subset P(\bt X)$. Then, for every continuous function $f\: R\to\R$, there exists 
a metrizable compact set $Y$, a
continuous surjective mapping $g\: X\to Y$, and a function $h\: P(Y)\to \R$ such that
\[
f=h\circ \restr {P(\bt g)}R.
\]
\end{theorem}

\begin{proof}
We set $\vt f=\restr f{P_f(X)}$.
Since $X^n$ is weakly $\R$-factorizable for every $n\in\N$, Theorem~\ref{t:ft:1} gives a
separable metrizable space $Y$, a continuous surjective mapping $g\: X\to Y$, and a mapping 
$\vt h\: P_f(Y)\to\R$ such that $\vt f=\vt h \circ \restr {P(g)}{P_f(X)}$. The space $Y$ is compact, because 
$X$ is pseudocompact.
It also follows that $X$ is a $G_\de$-dense subset of $\bt X$ (see \cite[Proposition~3.7.20]{at2009}).
By Proposition~\ref{p:ft:5}, the set $P_f(X)$ is dense
with respect to the mapping $P(\bt g)\: P(\bt X)\to P(Y)$.
Since $\restr{P(\bt g)}{P(X)}=P(g)$,  it follows by Proposition~\ref{p:ft:2} that 
the function $\vt h$ extends to a function $h\: P(Y)\to\R$ such that $f=h\circ \restr {P(\bt g)}R$.
\end{proof}

\begin{proposition}\label{p:ft:6}
Suppose given  a pseudocompact space $X$,  metrizable compact spaces $Y$ and $Z$,
a continuous surjective mapping $g\: X\to Y$, and a mapping $h\: Y\to Z$. If the composition 
$f=h\circ g\: X\to Z$ is continuous, then the mapping~$h$ is continuous.
\end{proposition}

\begin{proof}
Since the mapping $g\diag f$
is continuous, it follows that the set $\Gamma_h = (g\diag f) (X)$, being a continuous image of 
the pseudocompact space $X$ in the metrizable space $Y\times Z$, is compact and, therefore, closed in 
$Y\times Z$. The set
\[
\Gamma_h = \set{(g(x),h(g(x))):x\in X}=\set{(y,h(y)):y\in Y}
\]
is the graph of $h$, because $g$ is surjective. Therefore, $h$ is continuous, because its  graph 
is closed in $Y\times Z$ and the space $Z$ is compact \seee\cite[3.1.D]{EngelkingBookRu}.
\end{proof}

%%% end file('tex/sec/ft.tex') %%%

\section{Main Results}\label{sec:main}
%%% begin file('tex/sec/main.tex') %%%

\begin{theorem}\label{t:main:1}
Suppose that $X$ is a pseudocompact space such that $X^n$ are weakly $\R$-factorizable for all $n\in\N$ and 
$Y\subset P(\bt X)$ is pseudocompact, and $P_f(X)\subset Y$. Then $Y$ is $C^*$-embedded in $P(\bt X)$.
\end{theorem}

\begin{proof}
Let $f\: Y\to \R$ be a bounded continuous function. Theorem~\ref{t:ft:2} implies the existence of  
a metrizable compact set $K$, a continuous surjective mapping $g\: X\to K$, and a function $h\: P(K)\to \R$ 
such that $f=h\circ \restr {P(\bt g)}Y$. We set $\vt g=\restr {P(\bt g)}Y$. Then $f=h\circ \vt g$.

Let us show that the mapping $\vt g$ is surjective.
We have $P(g)(P_f(X))=P_f(K)$, because $g$ is surjective. 
Since $P_f(X)\subset Y$ and $P_f(K)$ are dense in $P(K)$, it follows that  $\vt g(Y)=P(\bt g)(Y)$ is dense 
in $P(K)$, and since $Y$ is pseudocompact,  it follows that  $\vt g(Y)$ is pseudocompact and 
dense in the metrizable compact set $P(K)$. Consequently, $\vt g(Y)$ is compact and coincides with~$P(K)$.

Since $f$ is bounded, $f=h\circ \vt g$, and the mapping $\vt g$ is surjective, we have $f(Y)=h(P(K))\subset 
[-a,a]=Z$ for some $a>0$. We assume that $f$ is a mapping from $Y$ to $Z$ and $h$ is a mapping from $P(K)$ to 
$Z$. Since $Y$ is pseudocompact, $Z$ and $P(K)$ are metrizable compact spaces, $\vt g\: Y\to P(K)$ is a 
continuous surjective mapping, $h\: P(K)\to Z$ is a mapping, and the composition $f=h\circ \vt g\: Y\to Z$ is 
continuous, it follows from Proposition~\ref{p:ft:6} that $h$ is continuous.

We set $\hat f=h\circ P(\bt g)$. The function $\hat f\: P(\bt Y)\to\R$ is continuous and extends~$f$.
\end{proof}

\begin{proof}[of Theorem \ref{t:intro:1}]
Proposition \ref{a:ft:2} implies that the product $X^n$ is $\R$-factorizable for all $n\in\N$.
By Theorem~\ref{t:main:1}, the space $Y$ is $C^*$-embedded in~$P(\bt X)$.
\end{proof}

\begin{proof}[of Theorem \ref{t:intro:4}]
Proposition~\ref{a:ft:3} implies that the product $X^n$ is weakly $\R$-factorizable for all $n\in\N$.
By Theorem~\ref{t:main:1}, the space $Y$ is $C^*$-embedded in~$P(\bt X)$.
\end{proof}

%ччхxxxx
\begin{proof}[of Theorem~\ref{t:intro:2}]
Assume the contrary. Let $n$ be the smallest integer for which $X^n$ is not pseudocompact.
There exists a locally finite sequence $(U_m)_m$ of nonempty open subsets of $X^n$ 
such that $U_m=\prod_{i=1}^n U_{m,i}$.
We set $\ep=\frac 1{2n}$ and
\begin{align*}
W_m &= W( (U_{m,i}, \ep)_{i=1}^n )
\end{align*}
for $m\in\N$.
We have $W_m\neq\es$, because $n\ep<1$.
Since $P(X)$ is pseudocompact,  it follows that  the sequence $(W_m)_m$ accumulates to some measure 
$\mu\in P(X)$, and since $\mu$ is a Radon measure, we have $\mu(K)>1 - \ep$ for some compact set $K\subset X$.

For every $\bar x\in K^n$, there exists an open neighborhood $V_{\bar x}\subset X^n$ of $\bar x$ and an integer 
$N_{\bar x}\in \N$ such that $U_m\cap V_{\bar x}=\es$ for $m>N_{\bar x}$. The family 
$\set{V_{\bar x}: \bar x\in K^n}$ is an open cover of the compact set $K^n$. Consequently, 
$K^n\subset \bigcup_{\bar x\in A} V_{\bar x}$ for some finite set $A\subset K^n$.
We set $V=\bigcup_{\bar x\in A} V_{\bar x}$ and $N=\max\set{N_{\bar x}:\bar x\in A}$.
Then $K^n\subset V$ and $U_m\cap V=\es$ for $m>N$.
Wallace's theorem \seee\cite[Theorem~3.2.10]{EngelkingBookRu} implies the existence of an open set 
$U\subset X$ such that
\[
K\subset U,\ \ K^n\subset U^n \subset V.
\]
The set $W(U,1-\ep)$ is a neighborhood of the measure $\mu$, because $\mu(K)>1 - \ep$.
Since the sequence $(W_m)_m$ accumulates to the measure $\mu$, 
we have $W(U,1-\ep)\cap W_m\neq\es$ for some $m>N$. Let $\nu\in W(U,1-\ep)\cap W_m$. 
Then $\nu(U)>1-\ep$ and $\nu(U_{m,i})>\ep$ for all $i=1,2,\dots,n$.
Consequently, $U\cap U_{m,i}\neq\es$ for all $i=1,2,\dots,n$, whence $U^n\cap U_m\neq\es$. 
This contradicts the relations $U_m\cap V=\es$ and $U^n\subset V$.
\end{proof}

\begin{proposition}
\label{f:defs:circled}
Any convex subset of a TVS is locally path-connected.
\end{proposition}

\begin{proof}
Let $C$ be a convex subset of a TVS $L$. A subset $V\subset L$ is said to be \term{circled} 
\see\cite{Schaefer1966ru} if $v\in V$ and $\la\in [-1,1]$ imply $\la v\in V$.
In TVS, open circled neighborhoods of zero form a local base at zero \seee\cite[1.2]{Schaefer1966ru}.
Let $C$ be a convex subset of TVS $L$, and let $\cN$ be a base of open circled neighborhoods of zero in $L$.
Take $v\in C$.
If $V\in\cN$ and $u\in M=(v+V)\cap C$, then $\la v+(1-\la)u\in M$ for $\la\in[0,1]$. Therefore, $M$ 
is path-connected.
The family $(v+V)\cap C$ for $V\in\cN$ forms a base of open path-connected neighborhoods of $v$ in~$C$.
\end{proof}

%%% end file('tex/sec/main.tex') %%%

\smallskip
The author thanks V.~I.~Bogachev for attention to the work and helpful discussions.
The author thanks the reviewers for carefully reading the paper and many corrections and comments, which 
improved the presentation.

\end{fulltext}

%\bibliographystyle{amsbibs}
%\bibliographystyle{amsbib}
%\bibliographystyle{plain}
%\bibliography{in/lit}

\def\elink#1{#1}
\def\bookvol#1{#1}
\def\arxiv#1{#1}
% +
%\input betap-amsbibs.bbl
%\input betap-amsbib.bbl
%%% begin file('tex/lit.bbl') %%%

%%% end file('tex/lit.bbl') %%%

\end{document}